\newcommand\enn{\mathbb N}
\newcommand\err{\mathbb R}
\newcommand{\eps}{\varepsilon}
\newcommand{\spann}{\mbox{span}}
\newcommand{\HB}{\text{H{\kern -0.35em}B}}
\newcommand{\vertiii}[1]{{\left\vert\kern-0.25ex\left\vert\kern-0.25ex\left\vert#1\right\vert\kern-0.25ex\right\vert\kern-0.25ex\right\vert}}
\DeclareMathOperator{\supp}{supp}
\newtheorem{thm}{Theorem}[section]
\newtheorem{prop}[thm]{Proposition}
\newtheorem{lem}[thm]{Lemma}
\newtheorem{cor}[thm]{Corollary}
\theoremstyle{definition}
\theoremstyle{remark}
\newtheorem{rem}[thm]{Remark}
\author[T.~A.~Abrahamsen]{Trond A.~Abrahamsen}
\address[T.~A.~Abrahamsen]{Department of Mathematics, University of
  Agder, Postbox 422 4604 Kristiansand, Norway.} 
\email{trond.a.abrahamsen@uia.no}
\urladdr{http://home.uia.no/trondaa/index.php3}
\author[P.~H\'ajek]{Petr H\'ajek}
\address[P.~H\'ajek]{Department of Mathematics\\Faculty of Electrical
Engineering\\Czech Technical University in Prague\\Technick\'a 2, 166 27
Praha 6\\ Czech Republic}
\email{hajek@math.cas.cz}
\author[S.~Troyanski]{Stanimir Troyanski}
\address[S.~Troyanski]{Institute of Mathematics and Informatics,
  Bulgarian Academy of Science, bl.8, acad. G.~Bonchev str.~1113 Sofia, Bulgaria and
  Departamento de Matem\'aticas, Universidad de Murcia, Campus de
  Espinardo, 30100 Espinardo (Murcia), Spain}
\email{stroya@um.es}
\thanks{P.~H\'ajek was supported by OPVVV CAAS
  CZ.02.1.01/0.0/0.0/16$\_$019/0000778.}
\thanks{S. Troyanski was supported  by MTM2017-86182-P (AEI/FEDER, UE), and by the Bulgarian National Scientific Fund, Grant KP–06–H22/4, 04.12.2018.}
\keywords{Almost square Banach space, Locally almost square Banach
space, Octahedral Banach space, M-ideal, Intersection property}
\subjclass[2010]{Primary: 46B20, 46B04. Secondary: 46B03}
\title{Almost square dual Banach spaces}
\begin{document}
\maketitle
 
\begin{abstract}
  We show that finite dimensional Banach spaces fail to be uniformly non
  locally almost square. Moreover, we construct an equivalent almost
  square bidual norm on $\ell_\infty.$ As a consequence we get that
  every dual Banach space containing $c_0$ has an equivalent almost
  square dual norm. Finally we characterize
  separable real almost square spaces in terms of their position in
  their fourth duals. 
\end{abstract}

\section{Introduction}
\label{sec:intro}
A Banach space $X$ with unit sphere $S_X$ is said to be
  \begin{enumerate}
    \item \label{asq}\emph{almost square (ASQ)} if for every $\eps >0$ and every
      finite set $(x_i)_{i=1}^n \subset S_X$ there exists $h \in
      S_X$ such that 
      \[\|x_i \pm h\| \le 1+\eps \hspace{2mm} \text{ for every } i=1,
      \ldots, n.\] 
    \item \label{lasq}\emph{locally almost square (LASQ)} if for every $\eps >0$ and
      every $x \in S_X$ there exists $h \in
      S_X$ such that 
      \[\|x \pm h\| \le 1+\eps.\] 
    \end{enumerate}
These notions were introduced in \cite{MR3415738} and have later on been
considered in several papers, among others \cite{MR3466077},
\cite{MR3619230}, and \cite{MR3661153}. The ASQ property is connected to the
intersection property, introduced by Harmand and Behrends
in \cite{BeHa} when studying proper M-ideals. A Banach space has the
\emph{intersection property (IP)} if for every $\eps > 0$ there are
$(x_i)_{i=1}^n \subset X$ with $\|x_i\| < 1$ for $i=1, \ldots , n$
such that $\|x_i - h\| \le 1$ implies $\|h\| \le \eps.$ The
connection is that a Banach space is ASQ if and only if it
fails the IP for all $0 < \eps < 1$
\cite[Proposition~6.1]{MR3415738}. The main result in the present
paper is that there exists a dual (in fact bidual) ASQ space. Using
this we give a positive answer to a question from
\cite[Question~6.6]{MR3415738} (later posed in
\cite[p.~1039]{MR3466077} and \cite[p.~130]{MR3619230}) whether ASQ
spaces can be dual. Also we give a negative
answer to a question from \cite[p.~167]{BeHa} whether a space that can never be
a proper M-ideal must have the IP. The latter follows
since, as noted in \cite[p.~167]{BeHa}, dual spaces can never be proper M-ideals. 

Let us now provide some background on ASQ and LASQ spaces. A typical example
of an ASQ space is $c_0.$ Just take the $h$ in the definition to be a
standard unit vector $e_n$ with big enough $n.$ The reason why this
works is that functions in $c_0$ have a common
zero at infinity. That of ``having a common zero'' is in some sense
a generical feature of the vectors in ASQ spaces while for LASQ spaces it
is that of ``having a zero''. The latter can be
exemplified by the space $X = \{f \in C[0,1]: f(0) = -f(1)\}$ (As usual
$C[0,1]$ denotes the space of continuous functions on the unit interval
with the max-norm). In $X$ all functions have a zero, but in this
space two different functions can certainly have different zeros, so
$X$ is LASQ, but not ASQ \cite[Example~3.7]{MR3415738}.

Both ASQ and LASQ spaces possess diameter two properties, so they
belong to a class of spaces which, in a sense, is diametrically
opposed to the class of spaces with the Radon-Nikodym property. If a
Banach space $X$ is ASQ, then all
finite convex combinations of slices of the unit ball $B_X$ of $X$
have diameter two \cite[Proposition~2.5 and Theorem~1.3]{MR3415738},
and if $X$ is LASQ then all slices of $B_X$ have diameter two
\cite[Proposition~2.5]{MR3235311}. 
Recall that a closed subspace $X$ of Banach space $Z$ is an M-ideal in
$Z$ provided there exists some Banach space $Y$ such that $Z^* = Y
\oplus_1 X^{\perp}$ where $X^{\perp}$ is the annihilator of $X$ (see
the book \cite{HWW} for the theory of M-ideals). A non-reflexive space
which is an M-ideal in its bidual
(i.e. an M-embedded space) is ASQ \cite[Corollary~4.3]{MR3415738}, so the class of
ASQ spaces is quite big and in fact quite a lot bigger than the class
of non-reflexive M-embedded spaces. Take e.g. $c_0(\ell_1),$ the
$c_0$-sum of $\ell_1.$ This space is ASQ by using the same idea as for
$c_0$, but the space is not M-embedded since the property of being
M-embedded is inherited by subspaces
\cite[III.1.Theorem~1.6]{HWW}. However, because of its $c_0$-like
nature, it is perhaps not so surprising that ASQ spaces happen to
share several properties by non-reflexive M-embedded
spaces. E.g. besides having quite strong diameter two properties, they
contain copies of $c_0$ and are stable by taking
$c_0$-sums \cite[Lemma~2.6 and
  Example~3.1]{MR3415738}. Non-reflexive M-embedded spaces are never
dual spaces, and
the same is true for a large class of ASQ spaces. For example, weakly
compactly generated dual spaces are never ASQ
\cite[p.~1564]{MR3415738}. Moreover, in \cite[Theorem~2.5]{MR3466077}
it was shown that if you put constraints on the vectors $h$ in \ref{asq} in the
definition of an ASQ space, then
the space can never be dual. More precisely it was shown that if a
Banach space $X$ satisfies that for any $\delta > 0$ there exists a
set $(h_\gamma)_{\gamma   \subset \Gamma} \subset S_X$ such that
\begin{enumerate}
\item for every $\eps > 0$ and $x_1, \ldots, x_n \in S_X$ there exists
  $\gamma \in \Gamma$ such that $\|x
  _i \pm h_\gamma\| \le 1 + \eps$ for every $i = 1, \ldots, n,$ and
\item for every finite subset $F$ of $\Gamma$ and any choice of signs
  $\xi_\gamma \in \{1,-1\}$ we have $\|\sum_{\gamma \in F}
  \xi_\gamma h_\gamma\| \le 1 + \delta,$
\end{enumerate}
then it cannot be a dual space. Such a space is said to be
\emph{unconditional almost square (UASQ)}. Among UASQ spaces we find
separable ASQ spaces \cite[Corollary 2.4]{MR3466077}. In
\cite[Question~6.1]{MR3466077} it was left open whether dual ASQ
spaces are UASQ. Our construction of a dual ASQ space answers also
this in the negative.

Let us now describe the content of the paper. In Section
\ref{sec:finitedim-unlasq} we show that finite dimensional Banach
spaces with a fixed dimension are uniformly non LASQ, but we construct
examples showing that this it not so if you allow the dimension to grow.

In Section \ref{sec:bidual-asq} the technique
developed in Section
\ref{sec:finitedim-unlasq}  for finite dimensional spaces is
refined to construct an equivalent bidual ASQ norm on $\ell_\infty.$
This in turn is used to show that every dual Banach space containing
$c_0$ admits an equivalent dual ASQ norm.

In Section \ref{sec:char-sep-asq} we characterize separable real ASQ spaces
in terms of their position in their fourth dual.

\section{Finite dimensional spaces}
\label{sec:finitedim-unlasq}
We start this section by showing that for a fixed dimension, finite dimensional
spaces are uniformly non LASQ.

\begin{prop}
  \label{prop:fixdim-john}
   Given $n \in \enn.$ Then there exists $\delta > 0$ such that for
   all Banach spaces $X$ with dimension $n$, there exists $x \in S_X$
   such that for all $h \in S_X$ we have 
   \begin{align}
     \label{eq:1}
      \max\|x \pm h\| \ge1 + \delta. 
   \end{align}
\end{prop}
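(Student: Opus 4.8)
The plan is to run a compactness argument over the totality of $n$-dimensional norms, using John's theorem to make that totality into an honestly compact parameter space. By John's theorem, after normalising the maximal-volume ellipsoid inscribed in the unit ball to be the Euclidean unit ball, every $n$-dimensional Banach space is linearly isometric to $(\err^n,\nu)$ for some $\nu$ in
\[
\mathcal K=\bigl\{\nu\ \text{a norm on }\err^n:\ \tfrac1{\sqrt n}|x|\le\nu(x)\le|x|\ \text{ for all }x\in\err^n\bigr\},
\]
where $|\cdot|$ is the Euclidean norm. Each $\nu\in\mathcal K$ is $1$-Lipschitz with respect to $|\cdot|$, so, restricting to the Euclidean sphere $S$, the Arzel\`a--Ascoli theorem shows $\mathcal K$ is compact in the metric of uniform convergence on $S$; closedness is immediate, since a uniform limit on $S$ of norms from $\mathcal K$, extended by homogeneity, is again such a norm.

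Next, introduce the functional that quantifies failure of the LASQ property,
\[
\Phi(\nu)=\sup_{u\in S}\ \inf_{w\in S}\ \Bigl(\max\bigl(\nu(\tfrac{u}{\nu(u)}+\tfrac{w}{\nu(w)}),\,\nu(\tfrac{u}{\nu(u)}-\tfrac{w}{\nu(w)})\bigr)-1\Bigr),
\]
so that $\nu(u)^{-1}u$ and $\nu(w)^{-1}w$ run through $S_\nu$ as $u,w$ run through the fixed sphere $S$. The first claim is that $\Phi$ is \emph{continuous} on $\mathcal K$: the displayed three-variable expression is jointly continuous on the compact space $\mathcal K\times S\times S$ --- one uses that $\nu(u),\nu(w)\ge 1/\sqrt n$ stay bounded away from $0$ and that $(\nu,x)\mapsto\nu(x)$ is jointly continuous --- and passing to the infimum over the compact variable $w$, and then to the supremum over the compact variable $u$, preserves continuity.

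The second claim is that $\Phi(\nu)>0$ for every $\nu\in\mathcal K$; this is the statement that no finite-dimensional space is LASQ. Let $x_0$ be an extreme point of the compact convex body $B_\nu$, for instance a point of $B_\nu$ of maximal Euclidean length; then $x_0\in S_\nu$, and the continuous function $h\mapsto\max(\nu(x_0+h),\nu(x_0-h))$ attains its minimum on the compact set $S_\nu$ at some $h_0$. If this minimum were $\le 1$, then $x_0\pm h_0\in B_\nu$ and $x_0=\tfrac12(x_0+h_0)+\tfrac12(x_0-h_0)$, forcing $h_0=0$ by extremality and contradicting $\nu(h_0)=1$. Hence $\min_{h\in S_\nu}\max(\nu(x_0+h),\nu(x_0-h))>1$, so $\Phi(\nu)>0$.

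Finally, $\Phi$ is continuous and strictly positive on the compact set $\mathcal K$, hence $\delta:=\min_{\mathcal K}\Phi>0$. Given an arbitrary $n$-dimensional $X$, realise it isometrically as $(\err^n,\nu)$ with $\nu\in\mathcal K$. Since $x\mapsto\inf_{h\in S_\nu}\bigl(\max(\nu(x+h),\nu(x-h))-1\bigr)$ is continuous on the compact sphere $S_\nu$, the supremum defining $\Phi(\nu)$ is attained at some $x_0\in S_\nu$, and then $\max(\nu(x_0+h),\nu(x_0-h))\ge 1+\Phi(\nu)\ge 1+\delta$ for every $h\in S_\nu$; transporting $x_0$ back through the isometry yields the required $x\in S_X$. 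The one step that needs real care is the verification that $\Phi$ is continuous on $\mathcal K$, i.e. that the construction depends continuously on the deforming unit ball; everything else is routine once John's theorem has supplied a compact parameter space.
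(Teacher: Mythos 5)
Your argument is correct, but it proceeds by a genuinely different route than the paper. The paper's proof is a one-line quantitative computation: it takes a contact point $x \in S_X \cap \partial J$ of the minimal-volume (John) ellipsoid and uses the parallelogram identity for the associated Euclidean norm together with $n^{-1/2}J \subset B_X \subset J$ to get the explicit bound $\max\|x\pm h\| \ge (1+n^{-1})^{1/2}$, hence an explicit $\delta = (1+n^{-1})^{1/2}-1$. You instead use John's theorem only to put every $n$-dimensional space in a fixed position $\tfrac1{\sqrt n}|\cdot|\le\nu\le|\cdot|$, making the family of norms compact (Arzel\`a--Ascoli), then show the LASQ-defect functional $\Phi$ is continuous there and strictly positive at each individual $\nu$ via an extreme point of $B_\nu$ (a farthest point in Euclidean norm), and conclude by taking the minimum of $\Phi$ over the compact parameter space. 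All the steps you flag as delicate do go through: joint continuity of $(\nu,x)\mapsto\nu(x)$ on bounded sets plus the lower bound $\nu\ge n^{-1/2}|\cdot|$ gives continuity of the three-variable expression, and inf/sup over the compact sphere preserve continuity; the extreme-point argument correctly rules out $\min_{h\in S_\nu}\max\nu(x_0\pm h)\le 1$; and attainment of the sup lets you pass from $\Phi(\nu)\ge\delta$ to a single witness $x$. The trade-off is that your soft compactness argument yields no explicit value of $\delta$, whereas the paper's computation does (and its $\delta\sim 1/(2n)$ dovetails with the subsequent examples showing the constant must degrade as $n$ grows); on the other hand, your argument isolates the purely qualitative fact that no finite-dimensional space is LASQ and upgrades it to uniformity by compactness, which is arguably more elementary in that it avoids any property of contact points or Euclidean identities beyond strict convexity.
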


\begin{proof}
 We can assume that $X = \err^n$ with any given norm $\|\cdot\|.$ We
 will show that there exists $x \in S_X$ such that (\ref{eq:1}) holds
 for any positive $\delta \le (1 + n^{-1})^{1/2} - 1$. To
 prove this we use John's Theorem (see
 e.g. \cite[Theorem~6.22]{MR2766381}) that there exists an ellipsoid
 $J$ of minimal volume such that 
  \begin{align}
    \label{eq:2}
    n^{-1/2}J
  \subset B_X \subset J,
  \end{align}
  i.e. 
  \begin{align}
    \label{eq:3}
    \|x\|_J \le \|x\| \le n^{1/2} \|x\|_J,
  \end{align}
  where $\|\cdot\|_J$ denotes the Euclidian
  norm on $X$ we get when using $J$ as the unit ball. We have $S_X
  \cap \partial J \not = \emptyset$ where $\partial J = \{(x_i)_{i=1}^n \in
  X: \sum_{i=1}^n x_i^2 = 1\} \subset J$ since otherwise $J$ could not be of
  minimal volume. Take $x \in
  S_X \cap \partial J.$ Then for $h \in S_X$ we have
  \begin{align*}
    \max\|x \pm h\|^2
    &\ge \max\|x \pm h\|_J^2\\
    &= \max\{\|x\|_J^2 + \|h\|_J^2 \pm 2 \left<x,h \right>\}\\
    &\ge \|x\|_J^2 + (n^{-1/2}\|h\|)^2\\
    &\ge \|x\|_J^2 + n^{-1}\|h\|^2 = 1 + n^{-1}, 
  \end{align*}
   so $\max\|x \pm h\| \ge (1 + n^{-1})^{1/2} \ge 1 + \delta.$
\end{proof}

If you allow the dimensions of the finite dimensional spaces to grow,
the conclusion of Proposition \ref{prop:fixdim-john} is no longer
true. Indeed, we will construct finite dimensional spaces for which,
roughly speaking, the corners of the unit balls flattens out as the
dimensions grow. Let us introduce some notation and definitions
that we need. By $(e_j)_{j=1}^{\infty}$ denote, as usual, the standard unit vectors in
$c_{00}$ (the vector space of finitely supported vectors). Let $k, n,
m \in \enn$ with $k\le n \le m$ and set
\begin{align*}
  A_n&=\bigg\{\frac{1}{n}\sum_{j \in A} e_j^* \in \ell_1^m: A \subseteq \{1,
       \ldots, m\}, |A| = n \bigg\},\\
 B_k &= \bigg\{\frac{1}{k}e_j^* \in\ell_1^m: j = 1, \ldots, m \bigg\},
\end{align*}
where $\ell_1^m$ denotes the vector space $\err^m$ with the $\ell_1$-norm.
Put $C_{k,n}:= B_k \cup A_n.$ 
Define $\| \cdot \|_{k,n}: \ell_\infty^m \to \err$ by
\begin{align}
  \label{eq:5}
  \|f\|_{k,n}= \sup_{x \in C_{k,n}}|f(x)|,
\end{align}
where
\[
f(x)=
\begin{cases}
  n^{-1}\sum_{j \in \supp x}f_j, & x \in A_n,\\
  k^{-1}f_j,  & x= k^{-1}e_j^*\in B_k.
\end{cases}
\]  
Clearly $\|\cdot\|_{k,n}$ is a norm on $\ell_\infty^m$ satisfying
\begin{align}
  \label{eq:8}
  k^{-1}\|f\|_\infty \le \|f\|_{k,n} \le \|f\|_\infty.
\end{align}
Put $F_{k,n}:= (\ell_\infty^m, \|\cdot\|_{k,n}).$
\begin{lem}
 \label{lem:finitedim-ell-infty-1overk-lasq}
  Let $k, n, m \in \enn$ with $k^2\le n$ and $m=2n.$ Let $f \in F_{k,n}$ of norm
  $1.$ Then there is $h\in F_{k,n}$ of norm $1$ such that 
  \begin{align}
    \label{eq:4}
    \|f \pm h\|_{k,n} \le 1 + \frac{1}{k}.
  \end{align} 
 \end{lem}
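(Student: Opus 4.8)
The plan is to take $h$ to be a single coordinate vector, suitably normalised, mimicking the way a far-out unit vector $e_N$ works in $c_0$. First I would unwind the definition (\ref{eq:5}) into the explicit formula: for $g \in \ell_\infty^m$,
\[
  \|g\|_{k,n} = \max\Bigl\{ \tfrac1k \|g\|_\infty,\ \tfrac1n
  \max_{|A|=n} \bigl| {\textstyle\sum_{j\in A} g_j} \bigr| \Bigr\},
\]
where the inner maximum is attained by letting $A$ consist of the $n$ largest, respectively the $n$ smallest, coordinates of $g$. In particular $\|k e_j\|_{k,n} = \max\{1, k/n\} = 1$, since $k^2 \le n$ forces $k \le n$; hence any vector of the form $h = k e_{j_0}$ is a genuine unit vector of $F_{k,n}$.

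Next I would select the index $j_0$. Here the hypothesis $m = 2n$ is essential: if one had $|f_j| > 1$ for every coordinate $j$, then among the $2n$ coordinates at least $n$ would share a common sign, and summing those $n$ coordinates and dividing by $n$ would produce a value strictly exceeding $1$, contradicting $\|f\|_{k,n} = 1$. So there is an index $j_0$ with $|f_{j_0}| \le 1$ (in fact at least two such). Fix one and set $h = k e_{j_0}$.

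It then remains to verify (\ref{eq:4}), which I would do by bounding the two parts of $\|f \pm h\|_{k,n}$ separately. For the $B_k$ part: every coordinate of $f \pm h$ other than the $j_0$-th has modulus at most $\|f\|_\infty \le k\|f\|_{k,n} = k$ by (\ref{eq:8}), while $|f_{j_0} \pm k| \le |f_{j_0}| + k \le 1 + k$, so $\tfrac1k\|f \pm h\|_\infty \le 1 + \tfrac1k$. For the $A_n$ part: if $j_0 \notin A$ then $\tfrac1n|{\sum_{j \in A}(f \pm h)_j}| = \tfrac1n|{\sum_{j \in A} f_j}| \le \|f\|_{k,n} = 1$; if $j_0 \in A$ then, using $|{\sum_{j \in A} f_j}| \le n\|f\|_{k,n} = n$, we obtain $|{\sum_{j \in A}(f \pm h)_j}| = |{\sum_{j \in A} f_j \pm k}| \le n + k$, and $\tfrac1n(n+k) = 1 + \tfrac{k}{n} \le 1 + \tfrac1k$ precisely because $k^2 \le n$. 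Taking the maximum of all these contributions yields $\|f \pm h\|_{k,n} \le 1 + \tfrac1k$.

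The only step requiring any real thought is the pigeonhole argument producing $j_0$: it is the sole place the hypothesis $m = 2n$ enters, and one must keep track of the sign pattern of the coordinates of $f$ (those exceeding $1$ versus those below $-1$). Everything else is an elementary estimate, and the condition $k^2 \le n$ is used exactly once, through the inequality $k/n \le 1/k$ in the bound for the $A_n$ part.
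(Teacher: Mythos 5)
Your proof is correct and follows essentially the same route as the paper: pick a coordinate $j_0$ with $|f_{j_0}|\le 1$ (guaranteed by pigeonhole on the $2n$ coordinates together with the $A_n$ part of the norm, which is exactly how the paper finds its index $l$), set $h=k e_{j_0}$, and check the $B_k$ and $A_n$ contributions separately, using $k^2\le n$ only for $k/n\le 1/k$. The only cosmetic differences are that you argue the existence of $j_0$ by contradiction rather than directly and that you verify $\|h\|_{k,n}=1$ explicitly, which the paper leaves implicit.
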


 \begin{proof}
  Let $f \in S_{F_{k,n}}.$ We
  want to find $h \in S_{F_{k,n}}$ such that
  \[
  \|f \pm h\|_{k,n} \le 1 + \frac{1}{k}. 
  \]
  To this end note that by the pigeonhole principle there is a subset $E$ of $\{1, 2,
  \ldots, m\}$ with cardinality $\ge n$ on which $f$ has the
  same sign. Assume without loss of generality that $f$ is
  non-negative here. Since for any $x \in A_n$ with $\supp x \subseteq
  E$ we have 
  \begin{align}
    \label{eq:15}
    1 = \|f\|_{k,n} \ge  |f(x)| = \frac{1}{n}\sum_{j \in \supp x}|f_j|, 
  \end{align}
  there exists $l \in E$ such that $0 \le f_l \le 1.$ Now, let $h =
  ke_l.$ Then
  \begin{align}
    \label{eq:17}
    \|f\pm h\|_{k,n}
    \nonumber &=\max\{\sup_{x \in B_k}|(f\pm h)(x)|, \sup_{x \in C_n}|(f
      \pm h)(x)|\}\\
    \nonumber&=\max\bigg\{\sup_{j \in \enn}\frac{|f_j \pm
               ke_l(e_j^*)|}{k}, \sup_{x \in C_n}\frac{|\sum_{j \in
      \supp x}f_j\pm ke_l (e_j^*)|}{n}\bigg\}\\
     &\le\max\bigg\{\max\bigg\{\sup_{j \not=l}\frac{|f_j|}{k},
       \frac{|f_l| + k}{k}\bigg\}, \sup_{x \in
      C_n}\frac{|\sum_{j \in
      \supp x}f_j \pm ke_l(e_j^*)|}{n}\bigg\}\\
    &\nonumber\le \max\bigg\{ \frac{1 + k}{k}, \frac{|\sum_{j \in
      \supp x}f_j)|}{n} + \frac{k}{n}\bigg\}\\
    &\nonumber\le \max\bigg\{ 1 + \frac{1}{k}, 1 + \frac{k}{n}\bigg\}
      \le 1 + \frac{1}{k},
  \end{align}
   as wanted.
 \end{proof}
  
 \begin{rem}
   \label{rem:finitedim-ell-infty-1overk-asq}
   Note that we could get (\ref{eq:4}) in Lemma
   \ref{lem:finitedim-ell-infty-1overk-lasq} to
   hold for any given number $N$ of norm $1$ vectors simply by
   choosing the dimension $m = m(N)$ of the space $F_{k,n}$ to be sufficiently big.
 \end{rem}

\section{A bidual ASQ norm on $\ell_\infty$}
\label{sec:bidual-asq}
An alert reader will have noticed that the lower isomorphism constant in
(\ref{eq:8}) changes as the dimension grows. In order to construct an
equivalent bidual ASQ norm on $\ell_\infty$ we want to
avoid this. Thus we need to refine a bit the technique developed in
the previous section.

Fix $k\in \enn$ such that $k \ge 2.$ For $n \in \enn$ let $E_n =
\{2^n, \ldots, 2^{n+1} -1\}$ and $C_n =\{2^{-1}e_l^* \pm 2^{-1}e_m^*
\in \ell_1: l, m \in E_n, l \not = m\}.$ For $N \in \enn$ let 
\begin{align*}
  A_N&=\bigg\{\frac{1}{N}\sum_{j \in A} e_j^* \in \ell_1: A \subset
       \enn, |A| = kN \bigg\}\\
  B    &= \bigg\{\frac{1}{k}e_j^* \in \ell_1: j \in \enn \bigg\}.\\
  C   & = \cup_{n=1}^\infty C_n.
\end{align*}
Put $D_N:= C \cup B \cup A_N.$ 
Define $\| \cdot \|_N: \ell_\infty \to \err$ by
\begin{align}
  \label{eq:18}
  \|f\|_N= \sup_{x \in D_N}|f(x)|,
\end{align}
where 
\[
f(x)=
\begin{cases}
  N^{-1}\sum_{j \in \supp x}f_j, & x \in A_N,\\
  k^{-1}f_j,  & x= k^{-1}e_j^*\in B,\\
  2^{-1}f_l \pm 2^{-1}f_m, & x = 2^{-1}e_l^* \pm 2^{-1}e_m^* \in C,
\end{cases}
\]  
when $f=(f_j).$ Clearly $\|\cdot\|_N$ is a norm. Moreover, note that for $f \in
\ell_\infty$ we have
\begin{align}
  \label{eq:9}
  k^{-1}\|f\|_\infty \le \|f\|_N \le k \|f\|_\infty,
\end{align}
so $\|\cdot\|_N$ is an equivalent norm on $\ell_\infty$ for which the
isomorphism constants are fixed and independent of $N.$ Moreover, the norm
is bidual.

\begin{lem}
  \label{lem:normN-bidual}
  The space $(\ell_\infty,\|\cdot\|_N)$ is the bidual of $(c_0,\|\cdot\|_N).$
\end{lem}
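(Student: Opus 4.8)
The plan is to combine the weak$^*$-lower semicontinuity of $\|\cdot\|_N$ with Goldstine's theorem. Identify $(c_0,\|\cdot\|_N)^*$ with $\ell_1$ and $(c_0,\|\cdot\|_N)^{**}$ with $\ell_\infty$ as vector spaces, and write $w^*=\sigma(\ell_\infty,\ell_1)$, $B_N^{c_0}=\{g\in c_0:\|g\|_N\le1\}$, $B_N=\{f\in\ell_\infty:\|f\|_N\le1\}$. By Goldstine, $B_N^{c_0}$ is $w^*$-dense in the closed unit ball of $(c_0,\|\cdot\|_N)^{**}$, so the lemma is equivalent to the identity $B_N=\overline{B_N^{c_0}}^{\,w^*}$. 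One inclusion is free: every $x\in D_N$ lies in $c_{00}\subseteq\ell_1$, so $f\mapsto\|f\|_N=\sup_{x\in D_N}|f(x)|$ is a supremum of $w^*$-continuous functions, hence $w^*$-lower semicontinuous; thus $B_N$ is $w^*$-closed and contains $\overline{B_N^{c_0}}^{\,w^*}$. The content is the reverse inclusion: every $f\in\ell_\infty$ with $\|f\|_N\le1$ should be a $w^*$-limit of members of $B_N^{c_0}$. Since $B_N$ is $\|\cdot\|_\infty$-bounded (indeed $\|\cdot\|_\infty\le k\|\cdot\|_N$ by (\ref{eq:9})) and $\ell_1$ is separable, $w^*$ is metrizable on $B_N$, so it suffices to produce such a \emph{sequence}.

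Fix $f\in\ell_\infty$ with $\|f\|_N\le1$. The obvious candidate is the truncation $g^{(r)}=\sum_{j=1}^r f_je_j\in c_{00}$: then $g^{(r)}_j\to f_j$ for each $j$ and $\|g^{(r)}\|_\infty\le\|f\|_\infty\le k$, so dominated convergence gives $g^{(r)}\to f$ in $w^*$; hence everything reduces to $\limsup_r\|g^{(r)}\|_N\le1$, after which $g^{(r)}/\max(1,\|g^{(r)}\|_N)$ lies in $B_N^{c_0}$ and still $w^*$-converges to $f$. To estimate $\|g^{(r)}\|_N=\sup_{x\in D_N}|g^{(r)}(x)|$ one treats $B$, $C$ and $A_N$ separately. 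For $x=k^{-1}e_j^*\in B$, $g^{(r)}(x)$ equals $k^{-1}f_j$ (if $j\le r$) or $0$, so $|g^{(r)}(x)|\le\|f\|_N$ because $x\in D_N$. For $x=2^{-1}e_l^*\pm2^{-1}e_m^*\in C$ with $l,m\in E_n$: if $l,m\le r$ the value is $f(x)$, $\le\|f\|_N$; if $l\le r<m$ it is $2^{-1}f_l$, and picking $m'\in E_n\setminus\{l\}$ (possible since $|E_n|\ge2$) and using $2\cdot2^{-1}f_l=(2^{-1}f_l+2^{-1}f_{m'})+(2^{-1}f_l-2^{-1}f_{m'})$ gives $|2^{-1}f_l|\le\|f\|_N$; if $l,m>r$ the value is $0$. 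So $B$ and $C$ never contribute more than $\|f\|_N$.

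The remaining case — the $A_N$-functionals — is the real obstacle. For $x=N^{-1}\sum_{j\in A}e_j^*\in A_N$ one gets $g^{(r)}(x)=N^{-1}\sum_{j\in A,\ j\le r}f_j$, the value of $f$ at a \emph{partial} average $N^{-1}\sum_{j\in A\cap\{1,\dots,r\}}e_j^*$ having at most $kN$ coordinates; such a partial average need not belong to $D_N$ (nor to its absolutely convex hull), so it is not automatically dominated by $\|f\|_N$, and a crude truncation can make $\|g^{(r)}\|_N$ exceed $\|f\|_N$ — deleting a very negative tail of $A$ raises the corresponding average. This is precisely what the block functionals $(C_n)$ and the geometric growth $|E_n|=2^n$ are designed to counteract: the plan is to choose $r$ at a block boundary and replace the plain truncation by a vector $g$ which agrees with $f$ on an initial segment, is modified on the (very long) first discarded block $E_{M+1}$ so as to reabsorb the deleted mass, is supported in a finite initial segment and uniformly bounded (so that $g\to f$ in $w^*$ still holds), and for which every $(\le kN)$-element partial average is brought back under control via the $A_N$-, $B$- and $C$-inequalities satisfied by $f$, yielding $\|g\|_N\le1+o(1)$. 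Establishing that such a $g$ exists and has these properties is the technical heart of the lemma; once it is available, $B_N=\overline{B_N^{c_0}}^{\,w^*}$ follows, hence $\|\cdot\|_N=\|\cdot\|_N^{**}$ on $\ell_\infty$, that is, $(\ell_\infty,\|\cdot\|_N)$ is the bidual of $(c_0,\|\cdot\|_N)$.
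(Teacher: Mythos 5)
Your framework---Goldstine plus weak$^*$ lower semicontinuity of $\|\cdot\|_N$, reducing the lemma to the identity $B_N=\overline{B_N^{c_0}}^{\,w^*}$---is sound and is essentially the dual formulation of what the paper does (the paper represents the bidual through the monotone shrinking basis $(e_j)$ as the sequences with $\sup_K\|P_Kf\|_N<\infty$ and then asserts $\sup_K\|P_Kf\|_N=\|f\|_N$). The free inclusion and your $B$- and $C$-estimates for truncations are correct. But the proposal does not prove the lemma: for the $A_N$-functionals you correctly observe that a plain truncation can fail, you announce a corrected approximant $g$ (modified on the first discarded block so as to ``reabsorb the deleted mass''), and you then explicitly defer its construction as ``the technical heart of the lemma.'' That deferred step \emph{is} the proof of the hard inclusion $B_N\subseteq\overline{B_N^{c_0}}^{\,w^*}$; as written, nothing establishes it, so the argument is incomplete exactly where the content lies.

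The obstacle you isolate is genuine and not an artifact of your route. With $A_N$ read literally as averages over sets of cardinality exactly $kN$, take $f_j=1$ for $j\le kN-1$ and $f_j=-\eps$ otherwise: then $\sup_{x\in A_N}|f(x)|=(kN-1-\eps)/N$, while $\sup_{x\in A_N}|(P_{kN-1}f)(x)|=(kN-1)/N$, so $\|P_{kN-1}f\|_N$ can exceed $\|f\|_N$ and the truncations do not stay in the ball (for the same reason the paper's displayed identity $\sup_K\|P_Kf\|_N=\|f\|_N$ is problematic for such $f$). Two honest ways out: either carry out your reabsorption construction in full detail, or relax the index set to $|A|\le kN$, which leaves $\|\cdot\|_N$ unchanged on $c_0$ (pad $A$ with indices where $f$ is arbitrarily small) but makes every partial average $N^{-1}\sum_{j\in A\cap\{1,\dots,K\}}f_j$ itself an admissible functional, so that $\|P_Kf\|_N\le\|f\|_N$ for all $K$ and the truncation argument closes immediately. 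In its present form the proposal has a gap at precisely the point it labels as the heart of the matter.
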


\begin{proof}
  Note that $(e_j)_{j=1}^\infty$ is a basis for 
  $(c_0,\|\cdot\|_N)$ which is monotone and shrinking.
  Thus $(c_0,\|\cdot\|_N)^{**} $ can be identified with space of
  sequences $f = (f_j)_{j=1}^\infty$ such that $\|f\| =
  \sup_K\|\sum_{j=1}^Kf_je_j\|_N   < \infty.$ Now observe that 
 \begin{align*}
  \|f\|
  &= \sup_K\|\sum_{j=1}^K f_je_j\|_N  = \sup_N \|P_Kf\|_N\\
  &= \sup_K \sup_{x \in D_N}|P_Kf(x)| = \lim_K \sup_{x \in
    D_N}|P_Kf(x)| = \|f\|_N,   
 \end{align*}
 where $P_K$ denotes the projection on $\ell_\infty$ that projects
 vectors onto their first $K$ coordinates.
\end{proof}

Put $X_N:=(c_0, \|\cdot\|_N)$ and
$X^{**}_N:= (c_0, \|\cdot\|_N)^{**} = (\ell_\infty, \|\cdot\|_N).$ We
will need the following lemma.

\begin{lem}
  \label{lem:existnkm}
  For $N \in \enn, \eps > 0, $ and $(f^i)_{i=1}^K \subset
  S_{X^{**}_N},$ there exists $n \in \enn$ and distinct $l, m \in E_n$ such that
  \begin{enumerate}
  \item\label{lem:existnkm-1} $|f^i_j| \le k^{-1}$ for all $j \in E_n$
    and $i \in \{1, \ldots, K\},$
   \item\label{lem:existnkm-2}$|f^i_l - f^i_m| < \eps$ for all $i \in \{1, \ldots,
    K\}.$ 
  \end{enumerate}
\end{lem}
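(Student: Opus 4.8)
The plan is to extract the two conclusions from a counting/pigeonhole argument applied to the blocks $E_n$, exploiting the fact that each $E_n$ has size $2^n$, which grows, while the number $K$ of vectors is fixed. First I would handle conclusion \ref{lem:existnkm-1}. For each fixed $i$ and each $n$, the set of indices $j \in E_n$ with $|f^i_j| > k^{-1}$ must be controlled: if $A \subset \enn$ with $|A| = kN$ and $A$ meets such a ``large-coordinate'' set too much, then evaluating $f^i$ against $\frac{1}{N}\sum_{j \in A}e_j^* \in A_N$ would force $\|f^i\|_N > 1$. More precisely, since $\|f^i\|_N = 1$, for any $A$ with $|A| = kN$ we have $\frac1N\sum_{j\in A}|f^i_j| \le 1$, hence the number of $j \in \enn$ with $|f^i_j| > k^{-1}$ is at most $kN - 1 < kN$ — wait, that bound is not uniform in $N$. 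The right way: fix any threshold; the set $G_i = \{ j \in \enn : |f^i_j| > k^{-1} \}$ has the property that $|G_i \cap A| \cdot k^{-1} < \frac1N \sum_{j\in A}|f^i_j| \le 1$ for every $A \subseteq G_i$ of size $kN$ (if $G_i$ were that large), so $|G_i| < kN$; but again this depends on $N$. Instead I would argue $G_i$ is \emph{finite}: if $j \in G_i$ then, using the singleton-type functionals is not enough (those give $\|f^i\|_N \ge k^{-1}|f^i_j|$, i.e. $|f^i_j| \le k$), so one uses that $f^i \in S_{X^{**}_N} = (\ell_\infty, \|\cdot\|_N)$ and $f^i$ need not tend to $0$. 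So $G_i$ need not be finite. The correct observation is that for any $n$, choosing $A$ to consist of $\min(|G_i \cap E_n|, \dots)$... Let me reconsider: actually the key is that $\sum_{j \in A}|f^i_j| \le N$ for $|A|=kN$ means the $kN$-th largest absolute value among all coordinates is $\le N/(kN) \cdot(\text{something})$; more simply, if there were $kN$ coordinates each exceeding $k^{-1}$ we'd get a contradiction, so at most $kN-1$ coordinates exceed $k^{-1}$ — but $N$ is fixed in the lemma statement! So $G_i$ is finite with $|G_i| \le kN - 1$, and thus $\bigcup_{i=1}^K G_i$ is finite; choose $n$ large enough that $E_n$ is disjoint from this finite set, giving \ref{lem:existnkm-1}.

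Second, for conclusion \ref{lem:existnkm-2}, I would use that each $f^i$ restricted to $E_n$ takes values in the interval $[-k^{-1}, k^{-1}]$ (by part \ref{lem:existnkm-1}, now available for all large $n$). Cover $[-k^{-1},k^{-1}]^K$ by finitely many boxes of side $< \eps$; there are, say, $M = M(\eps, k, K)$ such boxes. For each $j \in E_n$ the vector $(f^1_j, \dots, f^K_j)$ lands in one of these $M$ boxes. Since $|E_n| = 2^n$, as soon as $2^n > M$ the pigeonhole principle yields two distinct indices $l, m \in E_n$ landing in the same box, so $|f^i_l - f^i_m| < \eps$ for all $i$. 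Thus it suffices to pick $n$ both larger than $\log_2 M$ and large enough that $E_n \cap \bigcup_i G_i = \emptyset$; any such $n$ works, and infinitely many exist.

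The main obstacle — and the point that needs care — is the first step: establishing that the ``bad'' index sets $G_i = \{ j : |f^i_j| > k^{-1}\}$ are finite (indeed of size $< kN$). This is where the specific shape of the norm $\|\cdot\|_N$, in particular the averaging functionals in $A_N$ with $|A| = kN$, is essential, and where one must be careful that $f^i$ lives in the bidual $\ell_\infty$ rather than $c_0$, so ``small at infinity'' arguments are unavailable and one genuinely needs the $\ell_1$-averaging bound $\frac1N\sum_{j\in A}|f^i_j| \le \|f^i\|_N = 1$. Once finiteness of $\bigcup_{i=1}^K G_i$ is in hand, the remaining pigeonhole argument over the growing blocks $E_n$ is routine. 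I would also remark that this is precisely the place where the construction improves on Section~\ref{sec:finitedim-unlasq}: the blocks $E_n$ grow geometrically so that pigeonhole always succeeds, while the isomorphism constants in \eqref{eq:9} stay fixed.
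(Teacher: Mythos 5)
Your overall strategy is the same as the paper's: show that each $f^i$ has only finitely many coordinates of absolute value exceeding $k^{-1}$, pick $n$ so large that $E_n$ misses all of them, and then pigeonhole over the block $E_n$ (your $\eps$-box covering of $[-k^{-1},k^{-1}]^K$ is a clean and correct way to run the second pigeonhole; the paper does an iterated pigeonhole instead, but the content is the same). Part \ref{lem:existnkm-2} of your argument is fine.

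The gap is in your justification of the finiteness step. You lean on the inequality $\frac1N\sum_{j\in A}|f^i_j|\le 1$ for every $A$ with $|A|=kN$, and deduce that at most $kN-1$ coordinates can exceed $k^{-1}$. But the functionals in $A_N$ are $\frac1N\sum_{j\in A}e_j^*$ with all coefficients positive, so membership in $S_{X_N^{**}}$ only controls the \emph{signed} sums $\bigl|\frac1N\sum_{j\in A}f^i_j\bigr|$, not the sums of absolute values. The claimed inequality is false: for instance $f=\frac2k\sum_{j=1}^{kN}(-1)^j e_j$ has $\|f\|_N\le 1$ (the $B$- and $C$-functionals give at most $2/k^2$ and $2/k\le1$, and any $A_N$-functional picks up at most $kN/2$ coordinates of one sign, hence at most $N^{-1}\cdot\frac{kN}{2}\cdot\frac2k=1$), yet it has $kN$ coordinates of absolute value $2/k>k^{-1}$ and $\frac1N\sum_{j\in\supp f}|f_j|=2$. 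So both your auxiliary inequality and your bound $|G_i|\le kN-1$ fail, and the step as written does not go through. The repair is the sign-splitting argument that the paper uses implicitly: if $f^i$ had $2kN$ coordinates with $|f^i_j|>k^{-1}$, then at least $kN$ of them would carry the same sign, and the $A_N$-functional supported on those $kN$ indices would give $|f^i(x)|>\frac1N\cdot kN\cdot k^{-1}=1$, contradicting $\|f^i\|_N=1$. Hence $|G_i|<2kN$ (not $<kN$), which is still finite, and with this correction the rest of your proof is valid.
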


\begin{proof}
  Let $\eps > 0.$ Note that from the definiton of the norm each $f^i$
  has only finitely many coordinates  ($< 2kN$) of absolute
  value $> k^{-1}.$ Thus there exists $n_0 \ge 2$ such that
  \ref{lem:existnkm-1} holds for all $n \ge n_0.$ Now choose an integer
  $p > n_0$ such that $2k^{-1}/\eps < p.$ By the pigeonhole principle we
  can choose $n_1 > p$ such that $|f^1_l - f^1_m| < \eps$ for at least
  $p^K$ different pairs of distinct integers $l$ and
  $m$ in $E_{n_1}.$ Now again by the pigeonhole principle
  \ref{lem:existnkm-2} must hold for at least one of these pairs.  
\end{proof}

Let us prove that $X_N^{**}$ is getting closer and closer to being ASQ
as $N$ grows. Still it is not LASQ for any even number $N \ge k$.

\begin{lem}
  \label{lem:bidual-1overkN-asq}
  Let $N \in \enn$ and $(f^i)_{i=1}^K \subset S_{X^{**}_N}.$ Then there is
  $h\in S_{X_N}$ such that 
  \[
  \|f^i \pm h\|_N \le 1 + \frac{1}{N} 
  \]  
  for every $i =1, \ldots, K.$ Nevertheless, if $N$ is an even number
  $\ge k$, then $X_N$ is not LASQ.
\end{lem}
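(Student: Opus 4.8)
The plan is to treat the two assertions separately: the inequality $\|f^i\pm h\|_N\le 1+1/N$ will follow from Lemma~\ref{lem:existnkm} with a single well-chosen $h$, and the failure of LASQ will be witnessed by one explicit point of $S_{X_N}$. For the first assertion, apply Lemma~\ref{lem:existnkm} to $(f^i)_{i=1}^K$ with $\eps$ replaced by $1/N$, obtaining $n\in\enn$ and distinct $l,m\in E_n$ with $|f^i_j|\le k^{-1}$ for all $j\in E_n$ and $|f^i_l-f^i_m|<1/N$, for every $i$, and put $h:=e_l-e_m\in c_0=X_N$. One first checks $\|h\|_N=1$: the functional $2^{-1}e_l^*-2^{-1}e_m^*\in C_n$ evaluates $h$ to $1$, while every element of $A_N$ gives at most $1/N$, every element of $B$ at most $1/k$, and every remaining element of $C$ at most $1$. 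Then one estimates $\|f^i\pm h\|_N$ by running through $D_N$: on $B$, using $|f^i_j|\le k\|f^i\|_N=k$ and $|f^i_l|\le k^{-1}$, the value is at most $k^{-1}(k^{-1}+1)\le 1$; on all elements of $C$ except $2^{-1}e_l^*-2^{-1}e_m^*$ it is at most $1$ (here $k\ge 2$ is used); on $2^{-1}e_l^*-2^{-1}e_m^*$ it is $1+2^{-1}|f^i_l-f^i_m|\le 1+\frac1{2N}$; and on $A_N$, since $\sum_{j\in A}h_j\in\{-1,0,1\}$ while $|\sum_{j\in A}f^i_j|\le N$, it is at most $1+1/N$. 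This gives $\|f^i\pm h\|_N\le 1+1/N$ for every $i$. (Taking $K=1$ here already shows $X_N$ is $(1+1/N)$-LASQ, which is exactly why the second assertion is sharp and forces the failure modulus below, to be $<1/N$.)

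For the second assertion I would exhibit a point that defeats the LASQ condition quantitatively. Since $N$ is even, choose indices $j_1<\dots<j_{N/2}$ lying in pairwise distinct blocks $E_{n_1},\dots,E_{n_{N/2}}$, each with $n_i\ge 1$ (so each of these blocks has at least two elements), and set
\[
x:=2\sum_{i=1}^{N/2}e_{j_i}.
\]
The design idea is \emph{saturation}. The coordinate value $2$ makes each functional $2^{-1}e^*_{j_i}+2^{-1}e^*_m\in C_{n_i}$, $m\in E_{n_i}\setminus\{j_i\}$, evaluate $x$ to $1$, which is the largest value the norm tolerates on $C$; and having $N/2$ such coordinates makes $x$ evaluate to $1$ on every $\frac1N\sum_{j\in A}e_j^*\in A_N$ with $A\supseteq\{j_1,\dots,j_{N/2}\}$. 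A short check, using $k\ge 2$ for the bound $2/k\le 1$ on $B$ and $kN\ge N/2$ on $A_N$, shows $\|x\|_N=1$.

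Now suppose $h\in S_{X_N}$ satisfies $\|x\pm h\|_N\le 1+\eps$. Evaluating both $x+h$ and $x-h$ on $2^{-1}e^*_{j_i}\pm 2^{-1}e^*_m$ and using $x_{j_i}=2$, $x_m=0$, one gets $|h_{j_i}+h_m|\le 2\eps$ and $|h_{j_i}-h_m|\le 2\eps$, hence $|h_{j_i}|\le 2\eps$ and $|h_m|\le 2\eps$ for every $m\in E_{n_i}$ and every $i$. Next, evaluating $x\pm h$ on $\frac1N\sum_{j\in A}e_j^*$ with $A=\{j_1,\dots,j_{N/2}\}\cup A'$, where $A'$ has $kN-N/2$ elements disjoint from the $j_i$, and using $\sum_i x_{j_i}=N$, gives $\bigl|\sum_i h_{j_i}+\sum_{j\in A'}h_j\bigr|\le N\eps$, and therefore $\bigl|\sum_{j\in A'}h_j\bigr|\le 2N\eps$ for every admissible $A'$. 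Exchanging a single element of $A'$ then yields $|h_p-h_q|\le 4N\eps$ whenever $p,q\notin\{j_1,\dots,j_{N/2}\}$, and feeding this back into the sum bound shows that all of these coordinates are $O(N\eps)$ in modulus. Hence $\|h\|_\infty=O(N\eps)$, so $\|h\|_N\le k\|h\|_\infty=O(kN\eps)$, which contradicts $\|h\|_N=1$ once $\eps$ is below an explicit constant $c(k,N)>0$. Thus for $\eps<c(k,N)$ no $h\in S_{X_N}$ satisfies both inequalities, so $x$ defeats the LASQ condition and $X_N$ is not LASQ.

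I expect the genuinely routine parts to be the case-by-case estimate in the first assertion and the arithmetic bookkeeping in the last step. The one nonobvious point — and the step I would expect to be the main obstacle — is finding the right witness: one has to engineer $x$ so that it simultaneously saturates a whole family of $C_n$-functionals \emph{and} an $A_N$-functional, so that the $C_n$-functionals pin $h$ down on the blocks $E_{n_i}$ while the $A_N$-functionals propagate this smallness to every coordinate, using that $h$ lives in $c_0$; the prescribed coordinate value $2$ (forced by the $C$-functionals) and the count $N/2$ (forced by the $A_N$-functionals, which is where parity of $N$ enters) are exactly what makes this possible.
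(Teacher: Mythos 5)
Your proof is correct. For the first assertion you follow the paper exactly: invoke Lemma~\ref{lem:existnkm} with $\eps=1/N$, take $h=e_l-e_m$, and run the same case-by-case estimates over $B$, $C$ and $A_N$; and your witness $x=2\sum_{i}e_{j_i}$ for the failure of LASQ is the paper's $f=\sum_{j\in E}2e_j$, with the same opening move (the functionals $2^{-1}e_{j_i}^*\pm 2^{-1}e_m^*$ force $|h_{j_i}|,|h_m|\le 2\eps$ on the chosen blocks). Where you genuinely diverge is the endgame of the LASQ part. The paper argues by cases according to where $\|h\|_N=1$ is attained --- on $A_N$, on $B$, or on some $C_n$ with $n\ge N/2+2$ --- and in each case manufactures a functional in $A_N$ that pushes $\|f\pm h\|_N$ above $1+\eps$; this needs the somewhat delicate extraction of a subset $A_1\subset A\setminus E$ with $|N^{-1}\sum_{j\in A_1}h_j|\ge 1-\eps-1/(2k-1)$. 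You instead prove the global statement that \emph{every} coordinate of $h$ is $O(N\eps)$: the $A_N$-functionals supported on $\{j_1,\dots,j_{N/2}\}\cup A'$ give $|\sum_{j\in A'}h_j|\le 2N\eps$ uniformly over admissible $A'$, the exchange argument gives $|h_p-h_q|\le 4N\eps$ off the $j_i$'s, and since $h\in c_0$ one may send one index to infinity to conclude $\|h\|_\infty\le 4N\eps$, whence $\|h\|_N\le k\|h\|_\infty\le 4kN\eps<1$ for $\eps$ below an explicit constant, contradicting $\|h\|_N=1$. This route avoids the paper's case analysis and the attainment discussion altogether, at the cost of a worse (but still explicit) threshold of order $1/(kN)$ rather than $1/(3N)$ --- irrelevant for the qualitative conclusion. (Note also that your argument uses only the evenness of $N$, not $N\ge k$.) All the individual estimates you sketch check out, including $\|h\|_N=1$ for $h=e_l-e_m$, the bound $1+2^{-1}N^{-1}$ on the distinguished $C_n$-functional, and $\|x\|_N=1$ for the witness.
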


\begin{proof}
  Find $n \in \enn$ such that the conclusion in Lemma
  \ref{lem:existnkm} holds for $\eps = N^{-1}.$ Put $h = e_l - e_m.$
  Then $1 \ge \|h\|_N \ge h(2^{-1}e_l - 2^{-1}e_m) = 1,$ so $\|h\|_N = 1.$
  Note that
  \begin{align*}
    m_C
    &:=\sup_{x \in C}|(f^i \pm h) (x)|
     = \max \bigg\{\sup_{\substack{x \in C_n\\ \supp x \cap \{l,m\} \not =
      \emptyset}}|(f^i \pm h) (x)|, \sup_{\substack{x \in C \\ \supp x \cap
      \{l,m\} = \emptyset}}|f^i(x)|\bigg\}\\
    &\le \max \bigg\{\max|(f^i \pm h) (2^{-1}e_l^* \pm 2^{-1}e_m^*)|,
      \max_{\substack{j
      \in E_n\setminus\{l,m\} \\ p \in \{l,m\} } }|(f^i \pm h) (2^{-1}e_j^* \pm
      2^{-1}e_p^*)|, \|f^i\|_N \bigg\}\\
    & = \max \bigg\{\max|(2^{-1}(f^i_l \pm 1) \pm 2^{-1}(f^i_m \mp
      1)|, \max_{\substack{j \in E_n\setminus\{l,m\}\\ p \in \{l,m\} }
      }|(2^{-1}f^i_j \pm 2^{-1}(f^i_p \pm 1)|, 1 \bigg\}\\
    &\le \max \bigg\{\max\{k^{-1}, 1 + 2^{-1}\eps\}, k^{-1} +
      2^{-1}, 1 \bigg\}
      = 1 + 2^{-1}N^{-1},
  \end{align*}
  \begin{align*}
    m_B
    &:= \sup_{x \in B}\{|(f^i \pm h) (x)| = \max\{\sup_{j \not= l,
      m}k^{-1}|f^i_j|, k^{-1}\max_{j = l,m}|f^i_j \pm 1|\}\\
    &\le k^{-1}\max\{\|f^i\|_N, 1 + k^{-1}\} \le 1,
  \end{align*}
  and
  \begin{align*}
    m_{A_N}
    &:=\sup_{x \in A_N}|(f^i \pm h) (x)|\\
    &= \max \bigg\{ \sup_{\substack{x \in A_N\\ \supp x \cap\{l,m\} \not=
      \emptyset}} |(f^i \pm h) (x)|, \sup_{\substack{x \in A_N\\ \supp x \cap\{l,m\} =
       \emptyset}} |f^i(x)|\bigg\}\\
    &\le\max\{1 + N^{-1}, 1\} = 1 + N^{-1}.
  \end{align*}
  
  Thus
  \begin{align*}
    \|f^i \pm h\|_N
    & = \sup\{|(f^i \pm h)(x)|: x \in D_N\}\\
    & = \max\bigg\{\sup_{x \in C}|(f^i \pm h) (x)|, \sup_{x \in B}|(f^i \pm
      h) (x)|, \sup_{x \in A_N}|(f^i \pm h) (x)|\bigg\}\\
   & \le \max\{m_C, m_B, m_{A_N}\} \le 1 + N^{-1}
  \end{align*}
  for every $i = 1, \ldots, K.$

  The LASQ part: Let $E \subset \enn$ with $|E| = N/2$ such that $E \cap E_1 =
  \emptyset$ and $|E \cap E_n| = 1$ for all $2 \le n \le N/2 + 1$. Put
  $f = \sum_{j \in E}2e_j.$ Then $f$ has norm $1$. Let $0 < \eps < 1/3N$ and
  assume there is $h \in S_{X_N}$ such that $\|f \pm h\| \le 1 + \eps.$ We
  claim that for every $2 \le n \le N/2 +1,$ every $l, j \in E_n$ with
  $j \not= l$ and $l \in E_n \cap E,$ we have $|h_l| + |h_j| \le
  2\eps.$ In particular $|h_j| \le 2\eps.$
  Indeed, let $x = 2^{-1}(e_l^* \pm e_j^*) \in C_n.$ Then
  \begin{align*}
    1 + \eps 
    & \ge \max\|f \pm h\|\\
    &\ge \max(f \pm h)(x)\\
    &\ge 2^{-1}(2 + |h_l| + |h_m|) = 1 + 2^{-1}(|h_l| + |h_j|),
  \end{align*}
  so $|h_l| + |h_j| \le 2\eps$ and thus $|h_j| \le 2\eps.$ Hence for
  every $2 \le n \le N/2 + 1$ and every $x = 2^{-1}(e_l^* \pm e_m^*)
  \in C_n$ we have 
  \[
   |h(x)| \le 2^{-1}(|h_l| + |h_m|) \le 2\eps, 
  \]
  and for $x = k^{-1}e_j^*$ where $j \in E_n$ we have
 \[
  |h(x)| \le 2\eps k^{-1}.
  \]
  Also for $x = N^{-1}\sum_{j \in E} e_j^*$ we have
  \[
   |h(x)| 
   = |N^{-1}\sum_{j \in E} h_j| \le N^{-1}\sum_{j \in E} |h_j| 
   \le N^{-1}N2\eps/2 = \eps.
  \]
  Thus we have three possibilities;
  \begin{enumerate}
    \item \label{case1}there exists $y \in A_N$ such that $|h(y)| =
     |N^{-1}\sum_{j \in A}h_j| = 1,$ or 
   \item \label{case2} there exists $y \in B$ such that $|h(y)| =
     |h_j|/k = 1,$ or
   \item \label{case3}there exists $n \ge N/2 + 2$ and $y \in C_n$
     for which $h(y) = 2^{-1}(|h_l| + |h_m|) = 1.$ 
  \end{enumerate}
  Case \ref{case1}: We must
  have $|N^{-1}\sum_{j \in  A \setminus E} h_j| \ge 1 - \eps.$ Since $|A
  \setminus E| \ge (2k-1)N/2,$ there must exist a subset $A_1 \subset
  A \setminus E$ with $|A_1| = kN - N/2 = (2k -1)N/2$ such that
  $|N^{-1}\sum_{j \in A_1} h_j| \ge 1 - \eps - 1/(2k-1).$ Put $A_2 = E
  \cup A_1$ and $x = N^{-1}\sum_{j \in A_2} e_j^*.$ Then 
  \begin{align*}
   1 + \eps 
    &\ge\|f \pm h\| \\
    & \ge (f \pm h)(x)\\
    &\ge N^{-1}\max\bigg\{\sum_{j \in A_2} f_j + h_j, \sum_{j \in A_2}
      f_j - h_j\bigg\}\\
    & = N^{-1}\max\bigg\{\sum_{j \in E} f_j \pm \bigg(\sum_{j \in E}h_j +
      \sum_{j \in A_1}h_j\bigg)\bigg\}\\
    &\ge \max\bigg\{1 \pm N^{-1}\bigg(\sum_{j \in E}h_j +
      \sum_{j \in A_1}h_j\bigg)\bigg\}\\
    & \ge 1 -\eps + 1- \eps- 1/(2k-1),
  \end{align*}
   which implies that $\eps \ge 2/9 > 1/3N$, a contradiction.
  
  Case \ref{case2}: This is similar to Case \ref{case3}.
 
  Case \ref{case3}: Either $|h_l| \ge 1$ or $|h_m| \ge 1.$ Assume $|h_l| \ge 1.$
  Put $E_l = E \cup \{l\}$ and let $E_0 \subset \enn \setminus E_l$
  such that $A:= E_l \cup E_0$ satisfies $|A| = kN$ and $N^{-1}\sum_{j
    \in E_0} |h_j| < \eps.$ (This is possible as $h \in c_0.$) For $x
  := N^{-1}\sum_{j \in A}e_j^* \in A_N$ we get
  \begin{align*}
   1 + \eps 
    & \ge \|f \pm h\| \\
    &\ge (f \pm h)(x)\\
    &=N^{-1}\max\bigg\{\sum_{j \in A} f_j + h_j, \sum_{j
      \in A} f_j - h_j \bigg\}\\
    & = N^{-1}\max\bigg\{\sum_{j \in E} f_j \pm \bigg\{\sum_{j \in E} h_j + h_l +
      \sum_{j \in E_0} h_j\bigg)\bigg\}\\
    &= N^{-1}\max \bigg\{N \pm \bigg(\sum_{j \in E} h_j + h_l +
      \sum_{j \in E_0} h_j\bigg)\bigg\}\\
    &= \max\bigg\{1  \pm N^{-1}\bigg(\sum_{j \in E} h_j+ h_l +
      \sum_{j \in E_0} h_j\bigg)\bigg\}\\
    & \ge 1 - \eps + N^{-1} - \eps = 1 + N^{-1} - 2\eps,
  \end{align*}
  which implies that $\eps \ge 1/3N,$ a contradiction.
\end{proof}

Let $X:= c_0(X_N)$ where the $N$s are even numbers. Then $X^{**} =
\ell_\infty(X_N^{**}).$ Let $P_{\{N\}}:X^{**} \to X^{**}$ be the natural projection onto
$X_N^{**}$. Now we can finally prove our main result.

\begin{thm}
  \label{thm:bidual-ell-infty-asq}
  There exists a bidual renorming of $\ell_\infty$ which is ASQ.
\end{thm}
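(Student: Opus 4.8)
The plan is to prove that $X^{**} = \ell_\infty(X_N^{**})$ is ASQ; the rest of the statement is bookkeeping. Indeed, by \eqref{eq:9} each $X_N = (c_0,\|\cdot\|_N)$ is $c_0$ renormed with isomorphism constant at most $k$ (independent of $N$), so $X = c_0(X_N)$, with $N$ ranging over the even integers, is isomorphic to $c_0(c_0) \cong c_0$; hence $X^{**}$ is an equivalent renorming of $\ell_\infty$, and it is genuinely a bidual norm, being the bidual of the corresponding renorming of $c_0$ (apply Lemma \ref{lem:normN-bidual} in each coordinate together with the standard identities $(c_0(Y_N))^* = \ell_1(Y_N^*)$ and $(\ell_1(Y_N^*))^* = \ell_\infty(Y_N^{**})$). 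So it suffices to establish the ASQ property of $X^{**}$.

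To do so, fix $\eps > 0$ and norm-one vectors $F^1, \ldots, F^K$ in $X^{**}$, written coordinatewise as $F^i = (F^i_N)_N$ with $F^i_N \in X^{**}_N$ and $\sup_N \|F^i_N\|_N = 1$. First I would choose an even integer $N$ with $1/N < \eps$. The heart of the argument is to produce, on this single coordinate, one vector $h \in S_{X_N}$ with $\|F^i_N \pm h\|_N \le 1 + 1/N$ for every $i$. Granting this, let $H \in X^{**}$ be equal to $h$ in the $N$-th coordinate and $0$ elsewhere; then $\|H\|_{X^{**}} = \|h\|_N = 1$ (in fact $H \in X \subseteq X^{**}$) and
\[
\|F^i \pm H\|_{X^{**}} = \max\Big\{\sup_{M \neq N}\|F^i_M\|_M,\ \|F^i_N \pm h\|_N\Big\} \le \max\{1,\ 1 + 1/N\} < 1 + \eps
\]
for every $i$, which is precisely the ASQ condition.

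The vector $h$ is delivered by Lemma \ref{lem:bidual-1overkN-asq}, the only subtlety being that that lemma concerns norm-one vectors of $X^{**}_N$, whereas $\|F^i_N\|_N$ can be smaller than $1$, even $0$. I would handle this by normalizing: set $I = \{i : F^i_N \neq 0\}$, put $g^i := F^i_N/\|F^i_N\|_N \in S_{X^{**}_N}$ for $i \in I$, and apply Lemma \ref{lem:bidual-1overkN-asq} to the finite family $(g^i)_{i \in I}$ to obtain $h \in S_{X_N}$ with $\|g^i \pm h\|_N \le 1 + 1/N$ for all $i \in I$ (take any $h \in S_{X_N}$ if $I = \emptyset$). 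Then, for $i \in I$, writing $t_i = \|F^i_N\|_N \in (0,1]$ and using $F^i_N \pm h = t_i(g^i \pm h) \pm (1 - t_i)h$, one gets
\[
\|F^i_N \pm h\|_N \le t_i(1 + 1/N) + (1 - t_i) = 1 + t_i/N \le 1 + 1/N,
\]
while for $i \notin I$ one simply has $\|F^i_N \pm h\|_N = \|h\|_N = 1$. This supplies the $h$ needed above.

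I do not expect a genuine obstacle once Lemma \ref{lem:bidual-1overkN-asq} is available; the points that need care are the identification of $X^{**}$ as a \emph{bidual} renorming of $\ell_\infty$ (and not merely an ASQ space), the choice of an even $N$ with $1/N < \eps$, and the normalization step that bridges $S_{X^{**}_N}$ and the possibly smaller vectors $F^i_N$.
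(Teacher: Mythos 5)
Your proposal is correct and follows essentially the same route as the paper: choose one coordinate $N$ (the paper calls it $M$) with $1/N<\eps$, apply Lemma \ref{lem:bidual-1overkN-asq} there, extend the resulting $h$ by zero, and compute the $\sup$-norm coordinatewise, with the bidual/$\ell_\infty$ identification coming from the uniform equivalence constants in \eqref{eq:9}. Your normalization step for the case $\|F^i_N\|_N<1$ is a nice touch that the paper leaves implicit when it applies the lemma directly to $P_{\{M\}}f^i$.
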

 
\begin{proof}
 The space $X^{**}$ satisfies the demands. Indeed, for the ASQ part
 let $\eps > 0$ and $(f^i)_{i=1}^K \subset S_{X^{**}}.$ Choose $M >
 1/\eps.$ From Lemma \ref{lem:bidual-1overkN-asq} there exists for all
 $i= 1, \ldots, K$ an $h \in
 S_{X_M}$ such that $\|P_{\{M\}}f^i \pm h\|_M \le 1 + 1/M.$ Thus 
 \begin{align*}
   \|f^i \pm h\| 
   & = \max\bigg\{\sup_{N \not=M}\|P_{\{N\}}f^i\|_N, \|P_{\{M\}}f^i
     \pm h\|_M\bigg\} \le 1 + \frac{1}{M} < 1 + \eps,
 \end{align*}
  as wanted.
  
  That $X^{**}$ is isomorphic to $\ell_\infty,$ follows since
  $X$ is isomorphic to $c_0.$ Indeed, define an operator $T: X \to
  c_0$ in the following way. For $x = (x_N) \in X$ let 
  \[
   T(x) = (x_1(1), x_2(1), x_1(2), x_2(2), x_3(1),\ldots). 
  \]  
   Clearly $T$ is a linear isomorphism since all the $\|\cdot\|_N$
   norms are equivalent with the same constants.
\end{proof}

\begin{rem}
  From the construction we see that there exists a bidual ASQ Banach
  space which is isomorphic to $\ell_\infty$ such that $k$ in (\ref{eq:9})
  is $= 2.$ Whether this constant is best possible we do not know.
\end{rem}

\begin{cor}
  \label{cor:x-not-membedded}
  The space $X$ is ASQ, but not M-embedded.
\end{cor}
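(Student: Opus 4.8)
The plan is to prove that $X = c_0(X_N)$ is ASQ but not M-embedded, deducing both facts from what has already been established. For the ASQ part, note that $X$ is isometric to a subspace of $X^{**}$ via the canonical embedding, but more to the point one observes that $X$ is ASQ \emph{directly}: given $\eps > 0$ and $(x^i)_{i=1}^K \subset S_X$, each $x^i$ lives in $c_0(X_N)$, so for any fixed index $M$ the coordinate projection $P_{\{M\}}x^i$ lies in the ball of $X_M = (c_0, \|\cdot\|_M)$. Choosing $M > 1/\eps$ and applying Lemma \ref{lem:bidual-1overkN-asq} (whose perturbing vector $h$ is actually taken in $S_{X_M} \subset X_M$, not merely in the bidual), we get an honest element $h$ of the $M$-th summand $X_M \subset X$ of norm $1$ in $X$ with $\|P_{\{M\}}x^i \pm h\|_M \le 1 + 1/M$; since the other coordinates are untouched and $\|P_{\{N\}}x^i\|_N \le 1$, the $c_0$-sup norm gives $\|x^i \pm h\| \le 1 + 1/M < 1 + \eps$. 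This is exactly the computation already carried out in the proof of Theorem \ref{thm:bidual-ell-infty-asq}, just performed inside $X$ rather than $X^{**}$.

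For the failure of M-embeddedness, the cleanest route is via duality of a known structural obstruction. First I would recall that M-embedded spaces are never (isomorphic to) dual spaces in a way that would conflict here — but the sharper and more direct tool is: M-embeddedness passes to subspaces (\cite[III.1.Theorem~1.6]{HWW}, cited in the introduction). So it suffices to exhibit inside $X$ a closed subspace that is known \emph{not} to be M-embedded. The natural candidate is a subspace isometric (or isomorphic, which suffices since M-embeddedness is an isomorphic—no, it is an isometric invariant, but closed under subspaces) to something like $c_0(\ell_1^n)$-type or, better, to a space carrying a norm that makes it visibly non-M-embedded. Actually the slickest argument: $X_N = (c_0, \|\cdot\|_N)$ is \emph{not} LASQ for even $N \ge k$ by Lemma \ref{lem:bidual-1overkN-asq}, yet every non-reflexive M-embedded space is ASQ (\cite[Corollary~4.3]{MR3415738}), hence LASQ; so $X_N$ is not M-embedded. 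Since $X_N$ embeds isometrically as a complemented (indeed $1$-complemented) subspace of $X = c_0(X_N)$, and M-embeddedness is inherited by subspaces, $X$ cannot be M-embedded either.

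The key steps, in order: (1) observe that the $h$ produced in Lemma \ref{lem:bidual-1overkN-asq} lies in $S_{X_N}$, an actual subspace-summand of $X$, so the ASQ estimate transfers verbatim to $X$; (2) conclude $X$ is ASQ by taking $M$ large; (3) identify each $X_N$ as a (isometric, $1$-complemented) subspace of $c_0(X_N) = X$; (4) invoke that non-reflexive M-embedded spaces are LASQ together with Lemma \ref{lem:bidual-1overkN-asq}'s assertion that $X_N$ is not LASQ for even $N \ge k$, to see $X_N$ is not M-embedded; (5) apply the hereditary property of M-embeddedness (\cite[III.1.Theorem~1.6]{HWW}) to conclude $X$ is not M-embedded.

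The main obstacle — really the only point needing care — is making sure that the subspace-heredity of M-embeddedness applies in the form needed: one must check that $X_N$ sits inside $X = c_0(X_N)$ as a \emph{closed} subspace (it does, as a summand), and that "not LASQ" genuinely rules out M-embeddedness, which requires $X_N$ to be non-reflexive — but $X_N$ is isomorphic to $c_0$, hence non-reflexive, so \cite[Corollary~4.3]{MR3415738} applies and every such space is ASQ, a fortiori LASQ. Thus the contrapositive closes the argument with no further computation. An equally valid alternative for step (4)–(5) is to note directly that $X$, being a $c_0$-sum, has $X^{**} = \ell_\infty(X_N^{**})$ which is not of the form needed for $X$ to be an M-ideal in it; but the subspace argument via $X_N$ is shorter and self-contained given the lemmas already proved.
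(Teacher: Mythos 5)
Your proposal is correct and follows essentially the same route as the paper: the ASQ part is exactly the computation from the proof of Theorem \ref{thm:bidual-ell-infty-asq} (noting that the perturbing vector $h$ already lies in $S_{X_M}\subset X$), and the failure of M-embeddedness is deduced, as in the paper, from the heredity of M-embeddedness under subspaces together with Lemma \ref{lem:bidual-1overkN-asq}'s assertion that $X_N$ is not LASQ for even $N\ge k$. The only cosmetic difference is that you invoke \cite[Corollary~4.3]{MR3415738} (non-reflexive M-embedded implies ASQ, hence LASQ, using that $X_N\simeq c_0$ is non-reflexive) where the paper cites \cite[Theorem~4.2]{MR3415738} directly; the logic is the same.
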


\begin{proof}
  That $X$ is ASQ follows from the proof of Theorem
  \ref{thm:bidual-ell-infty-asq} (or one can
  simply use that a $c_0$ sum of Banach spaces is always
  ASQ). Moreover, all subspaces of an M-embedded space are M-embedded
  \cite[III.1.Theorem~1.6]{HWW}. Since by Lemma
  \ref{lem:bidual-1overkN-asq} not all component spaces $X_N$ are
  LASQ, and thus not M-embedded \cite[Theorem~4.2]{MR3415738}, $X$ is
  not M-embedded.  
\end{proof}

Let us also note that a similar argument to the one given in the proof of Lemma
\ref{lem:bidual-1overkN-asq}, shows that 
the first part of the conclusion of that lemma also holds for any
given $N \in \enn$ and any 
$(f_i)_{i=1}^K \subset S_{F_N}$ where $F_N:=(\ell^m_\infty, \|\cdot\|_N),$
provided the dimension $m = m(N)$ of $F_N$ is sufficiently
big. Note that every space $F_N$ is
isomorphic to $\ell_\infty^m$ with fixed constants $k^{-1}$ and $k$ (see
(\ref{eq:9})). Now if we let $Y = c_0(F_N)$ and use that the
property of being M-embedded is preserved by taking $c_0$-sums
\cite[III.1.Theorem~1.6]{HWW}, and otherwise argue as in the proof of Theorem
\ref{thm:bidual-ell-infty-asq}, we get that $Y$ satisfies the demands in  

\begin{thm}
   There exists a renorming of $c_0$ which is M-embedded and whose
   bidual is ASQ.  
\end{thm}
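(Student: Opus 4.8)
The plan is to mirror the construction of Theorem~\ref{thm:bidual-ell-infty-asq}, but replacing the infinite-dimensional summands $X_N$ (which are not M-embedded because some are not LASQ) with finite-dimensional summands $F_N$, which \emph{are} M-embedded since every finite-dimensional space is trivially M-embedded (indeed, reflexive, so $F_N = F_N^{**}$ and $X^\perp = \{0\}$ gives $F_N^{***} = F_N^* \oplus_1 \{0\}$). First I would record the finite-dimensional analogue of Lemma~\ref{lem:bidual-1overkN-asq}: for each $N \in \enn$ and each $K$, there is a dimension $m = m(N,K)$ so that for every $(f^i)_{i=1}^K \subset S_{F_N}$ with $F_N := (\ell_\infty^m, \|\cdot\|_N)$, there exists $h \in S_{F_N}$ with $\|f^i \pm h\|_N \le 1 + \tfrac1N$ for all $i$. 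This is exactly the first (positive) half of Lemma~\ref{lem:bidual-1overkN-asq}: one re-runs Lemma~\ref{lem:existnkm} and the estimates for $m_C$, $m_B$, $m_{A_N}$, the only change being that the pigeonhole steps in Lemma~\ref{lem:existnkm} now need the ambient dimension $m$ to be large enough to guarantee a block $E_n$ on which all $K$ vectors are small and two coordinates $l,m$ are $\eps$-close; since $m = m(N,K)$ may be chosen freely, this is fine. Crucially, the bad (LASQ-failure) half of Lemma~\ref{lem:bidual-1overkN-asq} is \emph{not} needed here, so we incur no obstruction from it.

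Next I would set $Y := c_0(F_N)$ where $N$ ranges over, say, all of $\enn$ (or the even numbers, it does not matter), with $F_N$ chosen of dimension $m(N,N)$ as above so that Lemma~\ref{lem:existnkm}'s conclusion is available for $K = N$ test vectors; for $K > N$ we simply use larger $N$. Then I would verify the two required properties of $Y$. For M-embeddedness: each $F_N$ is finite-dimensional hence M-embedded, and the $c_0$-sum of M-embedded spaces is M-embedded by \cite[III.1.Theorem~1.6]{HWW}, so $Y$ is M-embedded. For $Y \cong c_0$: each $F_N$ is $(\err^{m(N)}, \|\cdot\|_N)$, and by \eqref{eq:9} the norm $\|\cdot\|_N$ is equivalent to the $\ell_\infty^{m(N)}$-norm with constants $k^{-1}$ and $k$ \emph{independent of $N$}; therefore the obvious coordinatewise identification $Y = c_0(F_N) \to c_0$ (listing all coordinates of all blocks in a single sequence, as in the displayed formula for $T$ in the proof of Theorem~\ref{thm:bidual-ell-infty-asq}) is a linear isomorphism with uniformly bounded distortion. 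Consequently $Y^{**} = \ell_\infty(F_N^{**}) = \ell_\infty(F_N)$ is isomorphic to $\ell_\infty$, though for the statement as written we only need $Y$ to be a renorming of $c_0$.

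Finally I would check that $Y^{**}$ is ASQ, arguing verbatim as in the ASQ part of the proof of Theorem~\ref{thm:bidual-ell-infty-asq}. Given $\eps > 0$ and $(f^i)_{i=1}^K \subset S_{Y^{**}}$, pick $M > 1/\eps$ with $M \ge K$ (so that the finite-dimensional analogue of Lemma~\ref{lem:bidual-1overkN-asq} applies to $K$ vectors in $F_M$); let $P_{\{M\}} : Y^{**} = \ell_\infty(F_N) \to F_M$ be the natural projection, apply the analogue of Lemma~\ref{lem:bidual-1overkN-asq} to $(P_{\{M\}} f^i)_{i=1}^K \subset B_{F_M}$ to get $h \in S_{F_M}$ with $\|P_{\{M\}} f^i \pm h\|_M \le 1 + \tfrac1M$, and then, viewing $h$ as an element of $S_{Y^{**}}$ supported on the $M$-th block,
\[
\|f^i \pm h\| = \max\Bigl\{\sup_{N \ne M}\|P_{\{N\}}f^i\|_N,\ \|P_{\{M\}}f^i \pm h\|_M\Bigr\} \le 1 + \tfrac1M < 1 + \eps.
\]
This gives ASQ of $Y^{**}$ and completes the proof. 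The only genuinely new point over the earlier results is the observation that finite-dimensional spaces are automatically M-embedded, which side-steps the failure of M-embeddedness in the original construction; I do not anticipate a real obstacle, since the quantitative heart — the block-selection Lemma~\ref{lem:existnkm} and the $m_C, m_B, m_{A_N}$ estimates — transfers unchanged to the finite-dimensional setting once the dimension is taken large enough, exactly as flagged in Remark~\ref{rem:finitedim-ell-infty-1overk-asq}.
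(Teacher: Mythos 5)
Your proposal is correct and follows essentially the same route as the paper: finite-dimensional blocks $F_N=(\ell_\infty^m,\|\cdot\|_N)$ with $m$ large, M-embeddedness from reflexivity of the blocks plus stability of M-embeddedness under $c_0$-sums \cite[III.1.Theorem~1.6]{HWW}, isomorphism to $c_0$ via the uniform constants in (\ref{eq:9}), and ASQ of the bidual by the finite-dimensional version of the positive half of Lemma~\ref{lem:bidual-1overkN-asq}. Your explicit bookkeeping that the dimension must depend on the number $K$ of test vectors (and hence choosing $M\ge K$ as well as $M>1/\eps$) is exactly the point the paper leaves implicit via Remark~\ref{rem:finitedim-ell-infty-1overk-asq}, so no genuinely different argument is involved.
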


As consequences we now get the following renorming results. 

\begin{thm}
  \label{thm:dual-c0-asq-renorming}
  A dual Banach space admits a dual ASQ norm if and only if it contains
  an isomorphic copy of $c_0.$ 
\end{thm}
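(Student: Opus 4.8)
The plan is to prove both implications of Theorem~\ref{thm:dual-c0-asq-renorming}.

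\medskip
\textbf{The easy direction.} Suppose $Z$ is a dual Banach space admitting a dual ASQ norm. Every ASQ space contains an isomorphic copy of $c_0$: this is \cite[Lemma~2.6]{MR3415738} (mentioned in the introduction of the present paper), and it applies regardless of whether the ASQ norm is a dual norm. Hence $Z$ contains an isomorphic copy of $c_0$. This direction requires no new work.

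\medskip
\textbf{The hard direction.} Suppose $Z = W^*$ is a dual Banach space containing an isomorphic copy of $c_0$. First I would invoke the classical result that a dual space containing $c_0$ must contain an isomorphic copy of $\ell_\infty$; this is a standard consequence of Rosenthal's theorem on operators into $c_0$ together with the fact that $c_0$ is not complemented in $\ell_\infty$ — more precisely, if $W^*$ contains $c_0$ then it contains $\ell_\infty$, and moreover (by a result essentially due to Rosenthal, cf.\ the Bessaga--Pe\l czy\'nski type arguments) the copy of $\ell_\infty$ inside $W^*$ is in fact complemented, and even better, $W^*$ admits a decomposition $W^* = \ell_\infty \oplus V$ for some Banach space $V$, which is itself a dual space. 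The key point I need is: $Z \cong \ell_\infty \oplus_\infty V$ as a \emph{dual} space, i.e.\ the projection onto the $\ell_\infty$ factor is weak$^*$-continuous. I would get this from the structure theory of $\mathcal{L}_\infty$-spaces / injective spaces inside dual spaces: since $\ell_\infty$ is injective (1-complemented in every superspace), there is a projection from $Z$ onto the copy of $\ell_\infty$; adjusting, one arranges $Z = \ell_\infty \oplus V$. Then I would use Theorem~\ref{thm:bidual-ell-infty-asq}: replace the $\ell_\infty$ summand by its bidual ASQ renorming (call the renormed space $(\ell_\infty, \vertiii{\cdot})$, which is the bidual of $c_0$ under that norm, hence a dual norm), keep any dual norm on $V$, and take the $\infty$-sum. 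An $\infty$-sum of a dual norm with a dual norm is again a dual norm (it is the bidual norm on the corresponding $c_0$/pre-dual sum), and an $\infty$-sum of an ASQ space with any Banach space is ASQ — indeed one takes the perturbation vector $h$ living entirely in the ASQ summand, exactly as in the proof of Theorem~\ref{thm:bidual-ell-infty-asq} and as recorded for $c_0$-sums in \cite[Example~3.1]{MR3415738}. This produces an equivalent dual norm on $Z$ that is ASQ.

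\medskip
\textbf{The main obstacle.} The delicate step is obtaining the \emph{dual} (weak$^*$) splitting $Z = \ell_\infty \oplus V$ rather than merely an isomorphic splitting. Injectivity of $\ell_\infty$ gives a bounded linear projection $P\colon Z \to \ell_\infty$, but there is no reason a priori for $P$ to be weak$^*$-to-weak$^*$ continuous, and without that the $\infty$-sum renorming need not be a dual norm on $Z$ itself. The way around this is to work on the predual: since $Z = W^*$ contains $\ell_\infty$, a Sobczyk/Rosenthal-type argument on $W$ shows $W$ has a quotient isomorphic to $c_0$ whose kernel is complemented, equivalently $W = c_0 \oplus U$ as Banach spaces; taking adjoints gives $Z = W^* = \ell_\infty \oplus U^*$ as a dual space with weak$^*$-continuous projections. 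I would carry out this predual decomposition carefully — this is where all the real content sits — and then the renorming argument above goes through routinely. Once the dual $\infty$-sum decomposition is in hand, the remaining verifications (that the renorm is equivalent, dual, and ASQ) are immediate from Theorem~\ref{thm:bidual-ell-infty-asq} and the stability of ASQ under $\infty$-sums.
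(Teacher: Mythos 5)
Your overall strategy (split off a weak$^*$-complemented summand, replace it by the bidual ASQ renorming from Theorem~\ref{thm:bidual-ell-infty-asq}, and use that an $\infty$-sum with an ASQ summand is ASQ and that the dual of a $\oplus_1$-decomposition of the predual is a dual $\oplus_\infty$-norm) is the right one, and the easy direction via \cite[Lemma~2.6]{MR3415738} is fine. But the step you yourself identify as carrying ``all the real content'' is wrong as stated. You claim that since $Z=W^*$ contains $\ell_\infty$, the predual splits as $W=c_0\oplus U$, and that ``taking adjoints gives $Z=W^*=\ell_\infty\oplus U^*$.'' This confuses the duality: $c_0^*=\ell_1$, so $W=c_0\oplus U$ would give $W^*=\ell_1\oplus U^*$, not $\ell_\infty\oplus U^*$. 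Moreover the splitting $W=c_0\oplus U$ is simply false in general: take $W=\ell_1$, so $W^*=\ell_\infty\supset c_0$; then $W$ has no subspace (let alone complemented subspace) isomorphic to $c_0$, and no quotient map of $\ell_1$ onto $c_0$ can have complemented kernel, since that would embed $c_0$ complementably into $\ell_1$. So the ``Sobczyk/Rosenthal-type argument on $W$'' you invoke does not exist, and the weak$^*$ decomposition $Z=\ell_\infty\oplus V$ is left unproved in your write-up (injectivity of $\ell_\infty$, as you note, only gives a projection with no weak$^*$-continuity).

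The correct predual fact — and the one the paper uses — is the Bessaga--Pe\l czy\'nski theorem: if a dual space $Y^*$ contains an isomorphic copy of $c_0$, then the \emph{predual} $Y$ contains a complemented copy of $\ell_1$. Since $\ell_1\simeq X^*$ with $X=c_0(X_N)$ (the space of Theorem~\ref{thm:bidual-ell-infty-asq}, isomorphic to $c_0$), one writes $Y\simeq W\oplus_1 X^*$ and dualizes to obtain the equivalent dual norm $Y^*\simeq W^*\oplus_\infty X^{**}$, which is ASQ because $X^{**}$ is; here the projections are adjoints of projections on $Y$, hence automatically weak$^*$-continuous, so the obstacle you worried about never arises. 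In short: you should be splitting a copy of $\ell_1$ (the predual of the ASQ bidual space) off the predual, not a copy of $c_0$; with that replacement your argument becomes essentially the paper's proof, and without it the hard direction has a genuine gap.
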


\begin{proof}
  If $(Y^*,\|\cdot\|)$ contains $c_0$, then $Y$ contains a
  complemented subspace $Z$ isomorphic to $\ell_1$ which in turn is
  isomorphic to $X^*$ where $X = c_0(X_N).$ Let $W$ be the
  complement of $Z$ in $Y.$ Then $Y$ is isomorphic to $W \oplus_1 X^*.
  $ Hence $Y^*$ is isomorphic to $(W \oplus_1 X^*)^* = W^*
  \oplus_\infty X^{**}$ which clearly is ASQ as $X^{**}$ is. 
\end{proof}

This improves \cite[Theorem~2.5]{MR3466077} which says that the result above
holds for ASQ norms (not necessarily dual norms).

\begin{thm}
  \label{thm:sepc0-bidualasq}
  Every separable Banach space which contains $c_0$ admits an
  equivalent norm such that its bidual (and thus the space itself) is ASQ.  
\end{thm}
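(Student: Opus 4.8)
The plan is to reduce the statement to the results already obtained for the single space $X = c_0(X_N)$ (equivalently, for the M-embedded renorming $Y = c_0(F_N)$ whose bidual is ASQ) by exploiting Sobczyk's theorem together with the stability of the ASQ property under $c_0$-sums. Let $Z$ be a separable Banach space containing an isomorphic copy of $c_0$. First I would invoke Sobczyk's theorem: a separable Banach space containing $c_0$ contains it as a \emph{complemented} subspace. Thus, after an isomorphism, we may write $Z = V \oplus c_0$ for some closed subspace $V$ (the choice of complement is immaterial since all we need is the existence of a complemented $c_0$).

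Next, I would replace the complemented copy of $c_0$ by the renormed space $X = c_0(X_N)$, which is isomorphic to $c_0$ by the proof of Theorem \ref{thm:bidual-ell-infty-asq}, and whose bidual $X^{**} = \ell_\infty(X_N^{**})$ is ASQ. This gives an equivalent norm on $Z$ under which $Z$ is isomorphic to $V \oplus_\infty X$ — indeed, one may take the new norm to be $\max\{\|v\|_V,\|x\|_X\}$ on $V \oplus X$, transported back to $Z$ via the isomorphism. Then the bidual splits as $Z^{**} = V^{**} \oplus_\infty X^{**}$, because taking biduals commutes with finite $\ell_\infty$-sums (the bidual of an $\ell_\infty$-sum of two spaces is the $\ell_\infty$-sum of the biduals, as the dual is the corresponding $\ell_1$-sum and one dualizes again).

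It then remains to observe that an $\ell_\infty$-sum (indeed any $c_0$- or $\ell_\infty$-sum) of a Banach space with an ASQ Banach space is ASQ: given finitely many norm-one vectors $(z^i \oplus w^i)_{i=1}^K$ in $V^{**} \oplus_\infty X^{**}$ and $\eps>0$, the ASQ vector $h$ for $(w^i)_i \subset B_{X^{**}}$ — rescaled appropriately for those $w^i$ of norm $<1$, or handled exactly as in the proof of Theorem \ref{thm:bidual-ell-infty-asq} where the first coordinate contributes $\sup_{N\neq M}\|P_{\{N\}}f^i\|_N \le 1$ — yields $0 \oplus h$ with $\|(z^i\oplus w^i) \pm (0\oplus h)\| = \max\{\|z^i\|, \|w^i \pm h\|\} \le 1 + \eps$. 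Hence $Z^{**}$ is ASQ, and since ASQ passes to no obstruction here, the final parenthetical claim that $Z$ itself is ASQ follows from the fact that an ASQ bidual forces the space to be ASQ — or, more directly, from \cite[Lemma~2.6]{MR3415738} together with the observation that $Z$ contains the complemented ASQ summand $X$, so that $Z \cong V \oplus_\infty X$ is a $c_0$-type sum with an ASQ factor and is therefore ASQ.

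The main obstacle is not any single deep step but rather making sure the renorming is genuinely \emph{equivalent} and that the $\ell_\infty$-decomposition is compatible with biduality: one must check that transporting $\max\{\|\cdot\|_V, \|\cdot\|_X\}$ through the Sobczyk isomorphism gives a norm on $Z$ equivalent to the original, and that $(V \oplus_\infty X)^{**}$ really is $V^{**} \oplus_\infty X^{**}$ with the $X^{**}$-summand carrying precisely the ASQ norm $\|\cdot\|_N$-sum from Lemma \ref{lem:bidual-1overkN-asq}. Both are routine once Sobczyk's theorem is in hand, so the proof is short.
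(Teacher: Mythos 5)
Your proposal is correct and follows essentially the same route as the paper's proof: Sobczyk's theorem gives a complemented copy of $c_0$, which is replaced (via an equivalent norm) by $X = c_0(X_N)$, so that the space becomes isomorphic to $V \oplus_\infty X$ and its bidual $V^{**} \oplus_\infty X^{**}$ is ASQ because vectors of the form $0 \oplus h$ from Lemma \ref{lem:bidual-1overkN-asq} work. The extra care you take about vectors $w^i$ of norm less than one and about the parenthetical claim that the space itself is ASQ only spells out details the paper leaves implicit.
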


\begin{proof}
  Let $Y$ be a separable Banach space which contains $c_0.$ By
  Sobczyk's theorem there exists a complemented subspace $V$ of $Y$
  which is isomorphic to $c_0.$ Let $Z$ be the complement of $V.$ Then
  \[
   Y = Z \oplus V \simeq Z \oplus_\infty c_0 \simeq Z \oplus_\infty X,
  \]
   where $X = c_0(X_N).$ Thus
  \[
   X^{**} \simeq (Y \oplus_\infty X)^{**} = Y^{**} \oplus_\infty X^{**},
   \] 
  which is ASQ.
\end{proof}

\begin{rem}
  From Theorem \ref{thm:sepc0-bidualasq} we have in particular that every
  separable ASQ space admits an equivalent norm for which the bidual
  is ASQ. This result is in analogy with the recent result from
  \cite{langemets_lopez-perez} (partly answering a question of Godefroy) that
  every separable octahedral Banach spaces admits an equivalent norm
  for which the bidual is octahedral (see Section
  \ref{sec:char-sep-asq} for the definition of an octahedral space).   
\end{rem}

\section{A characterization of separable real ASQ spaces}
\label{sec:char-sep-asq}
Let us end the paper with a characterization of separable real ASQ
spaces. The characterization is in the same vein as following result of
Godefroy from \cite[p.~12]{G-octa}.
\begin{thm}
  Let $X$ be a separable Banach space. Then the following are
  equivalent.
  \begin{enumerate}
    \item $X$ is octahedral.
    \item There exists $h \in X^{**} \setminus \{0\}$ such that
      for all $x \in X$ we have
      \begin{align*}
        \|x + h\| = \|x\| + \|h\|.
      \end{align*}
  \end{enumerate}
\end{thm}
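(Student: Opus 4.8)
The plan is to prove the two implications separately. Recall that $X$ being octahedral means that for every finite dimensional subspace $E\subset X$ and every $\eps>0$ there is $y\in S_X$ with $\|e+y\|\ge(1-\eps)(\|e\|+1)$ for all $e\in E$ (equivalently: for finitely many $x_1,\ldots,x_n\in S_X$ and $\eps>0$ there is $y\in S_X$ with $\|x_i+y\|\ge 2-\eps$ for all $i$). I would treat (b)$\Rightarrow$(a) first, as it is routine; manufacturing the element $h$ in (a)$\Rightarrow$(b) is where the work lies.

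For (b)$\Rightarrow$(a): after rescaling assume $\|h\|=1$. Fix a finite dimensional $E\subset X$ and $\eps>0$, and by Goldstine's theorem pick a net $(y_\alpha)\subset B_X$ with $y_\alpha\to h$ in the weak$^*$ topology of $X^{**}$. First I would note $\|y_\alpha\|\to 1$: for each $\eta>0$ choose $g\in S_{X^*}$ with $h(g)>1-\eta$; then $\|y_\alpha\|\ge g(y_\alpha)\to h(g)>1-\eta$, while $\|y_\alpha\|\le 1$. Since the norm of $X^{**}$ is weak$^*$ lower semicontinuous, $\liminf_\alpha\|x+y_\alpha\|\ge\|x+h\|=\|x\|+1=2$ for every $x\in S_E$; as $S_E$ is compact, a finite $\eps/4$-net makes this uniform on $S_E$, so for a suitable $\alpha$ one has $\|x+y_\alpha\|\ge 2-\eps/2$ for all $x\in S_E$ and $\|y_\alpha\|\ge 1-\eps/4$. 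Then $y:=y_\alpha/\|y_\alpha\|\in S_X$ satisfies $\|x+y\|\ge 2-\eps$ on $S_E$, and a short homogeneity computation upgrades this to $\|e+y\|\ge(1-\eps)(\|e\|+1)$ for all $e\in E$, which is octahedrality.

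For (a)$\Rightarrow$(b): I would build $h$ as a weak$^*$ accumulation point of norm one vectors supplied by octahedrality. Fix a sequence $(x_i)$ dense in $X$. Using octahedrality on the increasing subspaces $E_k:=\spann\{x_1,\ldots,x_k,y_1,\ldots,y_{k-1}\}$ with rapidly decreasing errors $\eps_k$, construct inductively $y_k\in S_X$ with $\|e+y_k\|\ge(1-\eps_k)(\|e\|+1)$ for all $e\in E_k$; iterating this yields $\|x_i+y_{k_1}+\cdots+y_{k_m}\|\ge\prod_{l\le m}(1-\eps_{k_l})\,(\|x_i\|+m)$ whenever $i<k_1<\cdots<k_m$, and along the way one records near-norming functionals of these partial sums. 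Let $h$ be a weak$^*$ limit point of $(y_k)$ in $B_{X^{**}}$ (for instance a limit along a free ultrafilter). The inequality $\|x+h\|\le\|x\|+\|h\|$ is the triangle inequality; for the reverse inequality on $\{x_i\}$ — which then passes to all $x\in X$ by continuity of $x\mapsto\|x+h\|$, forces $\|h\|=1$, and so gives $h\ne 0$ — one must produce, for every $i$ and every $\delta>0$, a functional $g\in B_{X^*}$ with $g(x_i)>\|x_i\|-\delta$ and $h(g)>1-\delta$. Such $g$ is to be extracted as a weak$^*$ limit of the recorded near-norming functionals, which is possible because $X$ is separable and hence $B_{X^*}$ is weak$^*$ metrizable, provided the construction is arranged so that these functionals also converge on the relevant $y_k$'s.

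The step I expect to be the main obstacle is exactly that last extraction. One cannot simply take $h$ to be a weak$^*$ cluster point of a sequence $(y_k)$ with $\|x_i+y_k\|$ near $2$: weak$^*$ lower semicontinuity of the norm on $X^{**}$ gives only the useless bound $\|x_i+h\|\le\liminf_k\|x_i+y_k\|$, and for the same reason a bare weak$^*$ compactness/finite intersection argument over the sets $\{z\in B_{X^{**}}:\|x+z\|\ge\|x\|+1\}$ does not close up (these sets are only weak$^*$ $G_\delta$, and the normalisation constraint $\|z\|\ge 1-\eps$ is not weak$^*$ closed). One therefore has to carry the norming functionals through the construction explicitly and stabilise at each finite stage — by a pigeonhole refinement in the choice of $y_k$ — so that the diagonal relation $g_k(y_k)\to 1$ survives as an iterated limit $h(g)=1$ for a limit functional $g$ that still norms $x_i$. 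Making this interchange of limits go through is the heart of the matter.
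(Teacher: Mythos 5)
This theorem is stated in the paper as a quoted result of Godefroy, with a citation to \cite{G-octa} and no proof, so there is no in-paper argument to measure your proposal against. Your direction (b)$\Rightarrow$(a) is correct and complete: Goldstine plus weak$^*$ lower semicontinuity of the bidual norm, a finite $\eps$-net on $S_E$, and a normalisation is exactly the standard argument, and the homogeneity upgrade via a functional $f$ with $f(u+y)\ge 2-\eps$ (hence $f(u),f(y)\ge 1-\eps$ and $\|tu+sy\|\ge (t+s)(1-\eps)$) works.

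In (a)$\Rightarrow$(b) there is a genuine gap: the decisive step --- producing, for a weak$^*$ cluster point $h$ of your sequence $(y_k)$ and for each $x_i$ and $\delta>0$, a functional $g\in B_{X^*}$ with $g(x_i)>\|x_i\|-\delta$ and $h(g)>1-\delta$ --- is announced as ``the heart of the matter'' but never executed, and without it you have no lower bound on $\|x_i+h\|$ and not even $h\neq 0$ (a bare weak$^*$ cluster point of octahedrality witnesses carries no such information). The good news is that the device you yourself introduce, norming the long partial sums rather than the single vectors $x_i+y_k$, does close the gap. If at stage $j$ you require $\|e+ty_j\|\ge(1-\eps_j)(\|e\|+|t|)$ for all $e\in\spann\{x_1,\dots,x_j,y_1,\dots,y_{j-1}\}$ with $\eps_j$ summable fast enough, then for $p\le j$ the vector $z_{p,j}:=x_i+y_p+\cdots+y_j$ satisfies $\|z_{p,j}\|\ge\|x_i\|+(j-p+1)-d_p$ with $d_p:=\sum_{l\ge p}\eps_l(\|x_i\|+l)\to 0$ as $p\to\infty$. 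A Hahn--Banach functional $g_{p,j}$ norming $z_{p,j}$ must then satisfy $g_{p,j}(x_i)\ge\|x_i\|-d_p$ and $g_{p,j}(y_k)\ge 1-d_p$ for every $p\le k\le j$, because each summand is dominated by its norm while the total falls short of the sum of the norms by at most $d_p$. Fixing $p$ with $d_p<\delta$ and taking $g$ to be a weak$^*$ limit of $(g_{p,j})_j$ (legitimate since $B_{X^*}$ is weak$^*$ metrizable by separability) gives $g(x_i)\ge\|x_i\|-\delta$ and $g(y_k)\ge 1-\delta$ for all $k\ge p$, whence $h(g)=\lim_{\mathcal U}g(y_k)\ge 1-\delta$ for the nonprincipal ultrafilter limit $h$ of $(y_k)$; this yields $\|x_i+h\|\ge\|x_i\|+1$ on the dense set, hence everywhere by continuity, and forces $\|h\|=1$. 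Note that the ``diagonal'' functionals norming $x_i+y_j$ alone cannot work, so the passage to partial sums is essential and must be stated, not merely anticipated. For the record, Godefroy's own argument is in the same spirit but is organised via types: octahedrality yields a sequence whose type is $\tau(x)=\|x\|+1$, and such $\ell_1$-types on separable spaces are realized in $X^{**}$ --- the exact analogue of the Farmaki fourth-dual realization of $c_0$-types that the paper uses for Proposition~\ref{prop:asq-char}.
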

Recall that a Banach space is octahedral if for every for $\eps > 0$
and every finite set $(x_i)_{i=1}^n \subset S_X$ there is $h \in S_X$
such that $\|x_i \pm h\| \ge 2 - \eps$ for every $i=1, \ldots, n.$ 
\begin{prop}\label{prop:asq-char}
  Let $X$ be a separable real Banach space. Then the following are equivalent.
  \begin{enumerate}
    \item $X$ is ASQ.
    \item \label{b}There exists $h\in X^{****}\setminus \{0\}$ such
      that for all $x \in X$
  \begin{align*}
   \|x + h\| = \max\{\|x\|, \|h\|\}. 
  \end{align*}
  \end{enumerate}
\end{prop}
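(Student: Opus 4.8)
The plan is to prove the two implications separately, exploiting the analogy with Godefroy's octahedrality characterization but replacing the additive relation $\|x+h\|=\|x\|+\|h\|$ by the lattice-type relation $\|x+h\|=\max\{\|x\|,\|h\|\}$, which is exactly the bidual reformulation of the ASQ property. For the direction \ref{b}$\Rightarrow$(a), I would argue by a Goldstine/Helly-type approximation. Given $h\in X^{****}\setminus\{0\}$, normalize so $\|h\|=1$; the hypothesis says $\|x+h\|=\max\{\|x\|,1\}$ for every $x\in X$, and by homogeneity and the triangle inequality this in fact passes to $\|x\pm th\|\le\max\{\|x\|,t\}$ for all scalars $t$ (the $-h$ case uses that the identity holds for all $x\in X$, hence for $-x$). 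Now fix $\eps>0$ and $x_1,\dots,x_n\in S_X$. Since $X$ is separable, $B_{X^{**}}$ is weak$^*$ sequentially dense in $B_{X^{****}}$... more carefully, I want to realize $h$ approximately by an element of $S_X$. The standard device is a repeated application of the principle of local reflexivity (or Goldstine combined with the local structure): one finds $h_0\in S_X$ which, tested against the finitely many functionals $x_i^*\in S_{X^*}$ norming $x_i\pm h$ up to $\eps$, behaves like $h$; this gives $\|x_i\pm h_0\|\le 1+\eps$ for each $i$, i.e. ASQ. I expect this descent from the fourth dual down to $X$ to be the main obstacle, and the right tool is to iterate local reflexivity twice (from $X^{****}$ to $X^{**}$, then from $X^{**}$ to $X$), controlling the finite set of test functionals at each stage.

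For the direction (a)$\Rightarrow$\ref{b}, I would build $h$ by a weak$^*$-cluster-point/compactness argument. Enumerate a dense sequence $(x_m)$ in $S_X$. For each $n$, ASQ applied to $x_1,\dots,x_n$ and $\eps=1/n$ yields $h_n\in S_X$ with $\|x_i\pm h_n\|\le 1+1/n$ for $i\le n$. Viewing $(h_n)$ inside the weak$^*$-compact ball $B_{X^{****}}$ (or already in $B_{X^{**}}$), pass to a weak$^*$-cluster point $h$. Lower semicontinuity of the norm in the weak$^*$ topology, together with the fact that eventually every fixed $x_m$ is among the tested vectors, gives $\|x_m+h\|\le 1$ for every $m$ in the dense set, hence $\|x+h\|\le\max\{\|x\|,\|h\|\}$ for all $x\in X$ by density and scaling; the reverse inequality $\|x+h\|\ge\|h\|$ and $\|x+h\|\ge\|x\|$... the latter is not automatic and is precisely why one needs the $\pm$ in the ASQ definition: from $\|x\pm h_n\|\le 1+1/n$ one gets $2\|x\|\le\|x+h_n\|+\|x-h_n\|\le$ controlled, so in the limit $\|x+h\|=\|x-h\|=\max\{\|x\|,\|h\|\}$. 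The one genuinely delicate point is showing $h\ne 0$: a weak$^*$-cluster point of norm-one vectors can a priori be $0$. Here I would use that the $h_n$ cannot weak$^*$-converge to $0$ in a way compatible with $\|x\pm h_n\|\to\|x\|$ for a whole dense set — equivalently, test $h_n$ against a norming functional and use that $\|h_n\|=1$ forces, along a subnet, a nonzero $X^{***}$-functional to separate, so $h\ne 0$; alternatively one passes to $h$ in $B_{X^{****}}$ and observes directly that $h=0$ would make ASQ trivial/contradict non-triviality via the diameter-two property. This nontriviality of $h$ is the step I expect to require the most care.

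Finally I would note that both directions only ever use finitely many norming functionals at a time, which is why separability is exactly the right hypothesis (it makes the dense sequence available and lets the cluster-point argument produce a single $h$ working simultaneously for all of $S_X$), and why one is forced all the way up to the fourth dual rather than the bidual: the element $h_0\in S_X$ approximating $h$ in the forward direction lives in $X$, but to encode the exact equality $\|x+h\|=\max\{\|x\|,\|h\|\}$ one needs the extra weak$^*$-compactness room that only appears after taking the bidual of $X^{**}$, i.e. in $X^{****}$.
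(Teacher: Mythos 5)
Your direction \ref{b}$\Rightarrow$(a) is essentially the paper's: the authors simply say it is straightforward from two applications of the principle of local reflexivity, which is what you sketch. The genuine gap is in (a)$\Rightarrow$\ref{b}. Your plan is to take the vectors $h_n\in S_X$ produced by almost squareness along a dense sequence and pass to a weak$^*$-cluster point $h$ in $B_{X^{****}}$. This cannot work: the very inequalities $\|x\pm h_n\|\le 1+\eps_n$ force $x^*(h_n)\to 0$ for every norm-attaining $x^*\in S_{X^*}$ (since $|x^*(x)|+|x^*(h_n)|\le\max_{\pm}\|x\pm h_n\|$), hence by Bishop--Phelps and boundedness $h_n\to 0$ weakly in $X$. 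Since every $F\in X^{***}$ acts on the canonical copy of $X$ inside $X^{****}$ through its restriction $F|_X\in X^*$, the \emph{only} weak$^*$-cluster point of $(h_n)$ in $B_{X^{****}}$ (and in $B_{X^{**}}$) is $0$. So the difficulty you flag as ``the step requiring the most care'' ($h\neq 0$) is not a technical nuisance but a fatal obstruction to the whole construction; moreover, weak$^*$ lower semicontinuity only ever gives the upper estimate $\|x+h\|\le\max\{\|x\|,1\}$, never the required equality. The paper's remark after the proposition makes the obstruction precise: by Maurey's theorem, a nonzero $h\in X^{**}$ with $\|x+h\|=\|x-h\|$ for all $x\in X$ forces $\ell_1\subset X$, so for $X=c_0$ no bidual element can realize the identity, and a fortiori no cluster-point argument in the bidual (or one obtained by restricting functionals, as above) can succeed.

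What the paper actually does is pass through the theory of types. It first proves a lemma upgrading ASQ (for separable $X$) to a single sequence $(h_n)\subset S_X$ with $\lim_n\|x+h_n\|=\max\{\|x\|,1\}$ for \emph{all} $x\in X$ (your $h_n$ construction with a dense sequence and a diagonal argument gives exactly this, so that part of your proposal is on target). Then $\tau(x)=\lim_n\|x+h_n\|=\max\{\|x\|,1\}$ defines a type, which one checks is a $c_{0+}$-type (indeed an $\ell_\infty$-type), and the crucial ingredient is Farmaki's theorem (Proposition~2.10 of [Far88]) that a $c_{0+}$-type on a separable space is realized by an element of the fourth dual: there is $h\in X^{****}$ with $\|x+h\|=\tau(x)$ for all $x\in X$, and then $h\neq 0$ because $\|h\|=\tau(0)=1$. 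This realization theorem is precisely the idea missing from your argument; the element $h$ is not any weak$^*$ limit of the $h_n$, and ``extra weak$^*$-compactness room in $X^{****}$'' is not by itself the mechanism that produces it.
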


\begin{rem}
  Statement \ref{b} says that $X$ is an M-summand in the subspace $\spann(X,
  \{h\})$ in the fourth dual $X^{****}$ of $X$ (see \cite{HWW} for
  a definition of an M-summand). Under the assumption that $X$ is ASQ,
  it is generally not possible to find non-trivial subspaces in
  the second dual $X^{**}$ in which $X$ is an M-summand. Indeed, if
  this was possible
  it would follow that there exists $h\in X^{**}\setminus \{0\}$ such
  that $\|h + x\| = \|h - x\|$ for all $x \in X.$ But this yields that
  $X$ contains $\ell_1$
  by a result of Maurey \cite[Theorem]{Mau}. Thus
  $X = c_0$ is a concrete example where $X$ is ASQ and where \ref{b}
  with $h\in X^{**}\setminus \{0\}$ fails. The
  upshot of Proposition \ref{prop:asq-char}
  is, however, that for a real separable ASQ space $X,$ there is always
  enough room to find subspaces in the fourth dual in which $X$ is an M-summand.
\end{rem}

To prove Proposition \ref{prop:asq-char} we need a lemma.
\begin{lem}\label{prop:ASQ=superASQ}
  In a separable ASQ space there exists a sequence $(h_n) \subset
  S_X$ such that for all $x \in X$
  \begin{align*}
    \lim_n\|x+h_n\| = \max\{\|x\|, 1\}. 
  \end{align*}
\end{lem}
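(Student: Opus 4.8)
The plan is to produce $(h_n)$ by a single diagonalisation of the ASQ property against a countable dense subset of $S_X$, to observe that the resulting sequence is automatically weakly null, and then to get $\lim_n\|x+h_n\|=\max\{\|x\|,1\}$ as the combination of an elementary upper estimate (triangle inequality together with the ASQ bounds) with a lower estimate coming from weak lower semicontinuity of the norm together with the trivial bound $\|x+h_n\|\ge 2-\|x-h_n\|$.

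First I would fix a sequence $(d_k)_{k=1}^\infty$ dense in $S_X$ (which exists since $X$ is separable) and, for each $n$, apply the ASQ property to the finite set $\{d_1,\dots,d_n\}$ with $\eps=1/n$ to obtain $h_n\in S_X$ with $\|d_k\pm h_n\|\le 1+1/n$ for all $k\le n$. The key step is to check that $(h_n)$ is weakly null. For this, fix $x^*\in S_{X^*}$; for $k\le n$ one has, in the real case, $|x^*(d_k)|+|x^*(h_n)|=\max\{|x^*(d_k+h_n)|,|x^*(d_k-h_n)|\}\le\max\{\|d_k+h_n\|,\|d_k-h_n\|\}\le 1+1/n$. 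Given $\eta>0$, density of $(d_k)$ in $S_X$ furnishes $k_0$ with $|x^*(d_{k_0})|>1-\eta$, whence $|x^*(h_n)|\le\eta+1/n$ for all $n\ge k_0$, so $\limsup_n|x^*(h_n)|\le\eta$; letting $\eta\downarrow 0$ and recalling that $x^*$ was arbitrary shows $h_n\to 0$ weakly. (For complex scalars one replaces the identity $\max\{|a+b|,|a-b|\}=|a|+|b|$ by $\max\{|a+b|,|a-b|\}\ge(|a|^2+|b|^2)^{1/2}$, with the same conclusion.)

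Next I would establish the two estimates. For the upper one, fix $x\in X$; I may assume $x\ne 0$, put $c=\|x\|$, and given $\eps>0$ choose $k_0$ with $\|x-c\,d_{k_0}\|<\eps$. Writing $c\,d_{k_0}\pm h_n=c(d_{k_0}\pm h_n)+(1-c)(\pm h_n)$ when $c\le 1$, and $c\,d_{k_0}\pm h_n=(d_{k_0}\pm h_n)+(c-1)d_{k_0}$ when $c\ge 1$, I get $\|c\,d_{k_0}\pm h_n\|\le\max\{c,1\}+1/n$ for $n\ge k_0$, hence $\|x\pm h_n\|\le\eps+\max\{c,1\}+1/n$; letting $n\to\infty$ and then $\eps\downarrow 0$ yields $\limsup_n\|x\pm h_n\|\le\max\{\|x\|,1\}$. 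For the lower one, since $h_n\to 0$ weakly we have $x+h_n\to x$ weakly, so weak lower semicontinuity of the norm gives $\liminf_n\|x+h_n\|\ge\|x\|$; and since $\|x+h_n\|\ge 2\|h_n\|-\|x-h_n\|=2-\|x-h_n\|$, the upper estimate applied to $x-h_n$ gives $\liminf_n\|x+h_n\|\ge 2-\max\{\|x\|,1\}$, which is $\ge 1$ when $\|x\|\le 1$ (and when $\|x\|\ge 1$ we already have $\liminf_n\|x+h_n\|\ge\|x\|\ge 1$). Thus $\liminf_n\|x+h_n\|\ge\max\{\|x\|,1\}$ in all cases, and together with the upper estimate this is the assertion.

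I expect the main obstacle to be the weak nullity of $(h_n)$: one must see that the ASQ property, rather than merely handing over for each finite data set a single vector that almost annihilates the relevant functionals, in fact forces the whole diagonal sequence to converge weakly to $0$. This is exactly what upgrades the cheap bound $\liminf_n\|x+h_n\|\ge 2-\|x\|$ to the sharp $\liminf_n\|x+h_n\|\ge\|x\|$, and the latter is indispensable when $\|x\|>1$.
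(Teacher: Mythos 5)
Your proof is correct, and your construction of $(h_n)$ --- diagonalising the ASQ property against a countable dense subset of $S_X$ with $\eps=1/n$ --- is exactly the paper's. Where you genuinely diverge is in the verification: the paper only checks $x\in S_X$ by a triangle-inequality argument against the dense set (leaving general $x$ implicit), whereas you treat all $x\in X$ and, for the lower estimate when $\|x\|>1$, use the observation that the diagonal sequence is automatically weakly null together with weak lower semicontinuity of the norm. That weak-nullity step is correct and is a nice fact in its own right (it is close to how one sees that ASQ spaces contain $c_0$), but your closing claim that it is indispensable is not accurate: writing $x=cx'$ with $x'=x/\|x\|\in S_X$ and $c=\|x\|\ge 1$, the identity $x\pm h_n=(c+1)x'-(x'\mp h_n)$ gives $\|x\pm h_n\|\ge (c+1)-\|x'\mp h_n\|\to c$, so the sharp bound $\liminf_n\|x+h_n\|\ge\|x\|$ already follows from the triangle inequality and the $S_X$ case, in the same elementary spirit as your upper estimate and as the paper's argument. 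In short, both routes work: yours buys a fully explicit treatment of all $x\in X$ (the paper's printed proof, read literally, only covers $\|x\|=1$ and declares the rest done), at the cost of importing an extra functional-analytic ingredient that the statement does not actually need.
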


\begin{proof}
  Let $(x_i)_{i=1}^\infty$ be a dense subset of $S_X$ and $(\eps_n)$ a
  decreasing $0$ sequence of positive numbers. Since $X$ is ASQ we
  can for each $n \in \enn$ find $h_n \in S_X$ such
  that
  \begin{align}\label{ineq1}
    |\|x_i \pm h_n\| -1| < \eps_n \text{ for every } i = 1, \ldots, n.
  \end{align}
  
  Then for all $x_i$ we have
  \[\|x_i \pm h_n\| \to 1.\]
  Now let $x \in S_X$ arbitrarily and let $\eps > 0$. Find $x_i$ such
  that $\|x - x_i\| < \eps/2$ and $N \in \enn$ (using (\ref{ineq1}))
  such that $\eps_n < \eps/2$ for all $n \ge N$. Then for all $n \ge N$ we have
  \begin{align*}
    \|x \pm h_n\| &\le \|x - x_i\| + \|x_i - h_n\| < 1 + \eps \text{ and
    }\\
    \|x \pm h_n\| &\ge \|x_i - h_n\| - \|x - x_i\| > 1 - \eps,
  \end{align*}
 and we are done.
 \end{proof}

\begin{proof}[Proof of Proposition~\ref{prop:asq-char}]
  (a) $\Rightarrow$ (b). Let $(h_n) \subset S_X$ be the sequence from
  Proposition \ref{prop:ASQ=superASQ}. Note that the function $\tau: X
  \to \err$ given by
  \begin{align*}
    \tau(x) = \lim_n\|x+h_n\|
  \end{align*}
  defines a type with the property that $\tau(x) = \max\{\|x\|,1\}.$
  Using this fact it is straightforward to show that $\tau$ is a $c_{0+}$-type, i.e.
  \begin{align*}
    \tau(x) = \lim_m\lim_n\|x + ah_m +bh_n\|
  \end{align*}
  whenever $a,b \ge 0$ and $\max\{a,b\} = 1.$ (Actually it holds for
  any $a,b \in \err$ with $\max\{|a|, |b|\}=1$ so $\tau$ is an
  $\ell_\infty$-type.) Thus by \cite[Proposition~2.10]{MR920994},
  there exists $h \in X^{****}$ so that for all $x \in X$
  \begin{align*}
    \tau(x) = \|x+ h\|. 
  \end{align*}
 It follows that for all $x$ in $X$ we have $\|x + h\| = \max\{\|x\|, 1\}$.

(b) $\Rightarrow$ (a). This is straightforward using the Principle of
Local Reflexivity twice.
\end{proof}

\section*{Acknowledgement}
The authors would like to thank Vegard Lima for a conversation that
led to the question of whether Theorem \ref{thm:sepc0-bidualasq} could
be true.

\newcommand{\etalchar}[1]{$^{#1}$}
\def\cprime{$'$} \def\cprime{$'$} \def\cprime{$'$}
\providecommand{\bysame}{\leavevmode\hbox to3em{\hrulefill}\thinspace}
\providecommand{\MR}{\relax\ifhmode\unskip\space\fi MR }
\providecommand{\MRhref}[2]{%
  \href{http://www.ams.org/mathscinet-getitem?mr=#1}{#2}
}
\providecommand{\href}[2]{#2}


\begin{thebibliography}{BGLPRZ16}

\bibitem[ALL16]{MR3415738}
T.~A.~Abrahamsen, J.~Langemets, and V.~Lima, \emph{Almost square
  {B}anach spaces}, J. Math. Anal. Appl. \textbf{434} (2016), no.~2,
  1549--1565. \MR{3415738}

\bibitem[BGLPRZ16]{MR3466077}
J.~Becerra~Guerrero, G.~L\'{o}pez-P\'{e}rez, and A.~Rueda~Zoca,
  \emph{Some results on almost square {B}anach spaces}, J. Math. Anal. Appl.
  \textbf{438} (2016), no.~2, 1030--1040. \MR{3466077}

\bibitem[BH85]{BeHa}
E.~Behrends and P.~Harmand, \emph{{Banach} spaces which are proper
  {$M$}-ideals}, Studia Math. \textbf{81} (1985), 159--169.

\bibitem[Far88]{MR920994}
V.~A.~Farmaki, \emph{{$c_0$}-subspaces and fourth dual types}, Proc.
  Amer. Math. Soc. \textbf{102} (1988), no.~2, 321--328. \MR{920994
  (89a:46037)}

\bibitem[FHH{\etalchar{+}}11]{MR2766381}
M.~Fabian, P.~Habala, P.~H{\'a}jek, V.~Montesinos, and V.~Zizler, \emph{Banach
  space theory}, CMS Books in Mathematics/Ouvrages de Math\'ematiques de la
  SMC, Springer, New York, 2011, The basis for linear and nonlinear analysis.
  \MR{2766381 (2012h:46001)}

\bibitem[GLRZ17]{MR3619230}
L.~Garc\'{\i}a-Lirola and A.~Rueda~Zoca, \emph{Unconditional almost
  squareness and applications to spaces of {L}ipschitz functions}, J. Math.
  Anal. Appl. \textbf{451} (2017), no.~1, 117--131. \MR{3619230}

\bibitem[God89]{G-octa}
G.~Godefroy, \emph{Metric characterizations of first {Baire} class linear forms
  and octahedral norms}, Studia Math. \textbf{95} (1989), 1--15.

\bibitem[HWW93]{HWW}
P.~Harmand, D.~Werner, and W.~Werner, \emph{{$M$}-{Ideals} in {Banach} {Spaces}
  and {Banach} {Algebras}}, Lecture Notes in Math.\ 1547, Springer,
  Berlin-Heidelberg-New York, 1993.

\bibitem[Kub14]{MR3235311}
D.~Kubiak, \emph{Some geometric properties of the {C}es\`aro function
  spaces}, J. Convex Anal. \textbf{21} (2014), no.~1, 189--200. \MR{3235311}

\bibitem[LLP]{langemets_lopez-perez}
J.~Langemets and G.~López-Pérez, \emph{Bidual octahedral renormings
  and strong regularity in banach spaces}, Journal of the Institute of
  Mathematics of Jussieu, (2019) 1–-17.

\bibitem[LLRZ17]{MR3661153}
J.~Langemets, V.~Lima, and A.~Rueda~Zoca, \emph{Almost square and
  octahedral norms in tensor products of {B}anach spaces}, Rev. R. Acad. Cienc.
  Exactas F\'{\i}s. Nat. Ser. A Mat. RACSAM \textbf{111} (2017), no.~3,
  841--853. \MR{3661153}

\bibitem[Mau83]{Mau}
B.~Maurey, \emph{Types and $\ell_1$-subspaces}, Texas Functional Analysis
  Seminar (E.~Odell and H.~P. Rosenthal, eds.), The University of Texas at
  Austin, Longhorn Notes, 1982--1983, pp.~123--137.

\end{thebibliography}
\end{document}